\documentclass[12pt]{amsart}

\usepackage{amssymb}
\usepackage{mathrsfs}
\usepackage{color}
\usepackage{enumitem}

\usepackage{anysize}
\marginsize{2cm}{2cm}{2cm}{2cm}

\renewcommand{\le}{\leqslant}
\renewcommand{\ge}{\geqslant}

\usepackage{versions}
\usepackage{hyperref}
\hypersetup{
  colorlinks   = true, 
  urlcolor     = blue, 
  linkcolor    = blue, 
  citecolor   = red 
}
\usepackage{cancel}
\usepackage{yhmath}


\newtheorem{theorem}{Theorem}[section]

\newtheorem{lemma}[theorem]{Lemma}

\newtheorem{corollary}[theorem]{Corollary}

\theoremstyle{definition}
\newtheorem{definition}[theorem]{Definition}

\theoremstyle{remark}

\newtheorem{example}[theorem]{Example}

\numberwithin{equation}{section}


%
{%
  \goodbreak
}
{%
  \goodbreak
}
{%
  ~]]\goodbreak\smallskip
}
\excludeversion{solution}
\excludeversion{answer}


\DeclareMathOperator{\ind}{ind}
\DeclareMathOperator{\coind}{co-ind}
\DeclareMathOperator{\hind}{h-ind}

\newcommand{\id}{\mathop{\rm id}\nolimits}

\newcommand{\where}{\mathop{\ |\ }}

\newcommand{\hchi}{\mathop{\operatorname{h-\chi}}}

\renewcommand{\epsilon}{\varepsilon}
\renewcommand{\phi}{\varphi}
\renewcommand{\kappa}{\varkappa}
\renewcommand{\theta}{\vartheta}

\title{Hedetniemi's conjecture from the topological viewpoint}

\author{Hamid Reza Daneshpajouh{$^\spadesuit$}}
\author{Roman~Karasev{$^\clubsuit$}}
\author{Alexey~Volovikov{$^\diamondsuit$}}

\thanks{{$^\clubsuit$} Supported by the Federal professorship program grant 1.456.2016/1.4 and the Russian Foundation for Basic Research grants 18-01-00036 and 19-01-00169}

\address{Hamid Reza Daneshpajouh, School of Mathematics, Institute for Research in Fundamental Sciences (IPM), P.O. Box 19395-5746, Tehran, Iran.}
\email{hr.daneshpajouh@ipm.ir}

\address{Roman~Karasev, Moscow Institute of Physics and Technology, Institutskiy per. 9, Dolgoprudny, Russia 141700\newline \indent
Institute for Information Transmission Problems RAS, Bolshoy Karetny per. 19, Moscow, Russia 127994}
\email{r\_n\_karasev@mail.ru}
\urladdr{http://www.rkarasev.ru/en/}

\address{Alexey Volovikov, Department of Higher Mathematics, MTU MIREA, Vernadskogo av., 78, Moscow, 119454, Russia}
\email{a\_volov@list.ru}

\subjclass[2010]{05C15, 55M20, 55M35}
\keywords{Chromatic number, Equivariant maps}

\begin{document}

\begin{abstract}
This paper is devoted to studying a topological version of the famous Hedetniemi conjecture which says:
The $\mathbb Z/2$-index of the Cartesian product of two $\mathbb Z/2$-spaces is equal to the minimum of their $\mathbb Z/2$-indexes.
We fully confirm the version of this conjecture for the homological index via establishing  a stronger formula for the homological index of the join of $\mathbb Z/2$-spaces. Moreover, we confirm the original conjecture for the case when one of the factors is an $n$-sphere. 
Analogous results for $\mathbb Z/p$-spaces are presented as well. In addition, we answer a question about computing the index of some non-trivial products, raised by Marcin Wrochna. Finally, some new topological lower bounds for the chromatic number of the Categorical product of (hyper-)graphs are presented. 
\end{abstract}

\maketitle

\section{Introduction}

Hedetniemi conjectured \cite{hedetniemi1966} that the chromatic number of the categorical product of two graphs equals the minimum of their chromatic numbers. This conjecture has been open for more than 50 years, and this year a counterexample for this conjecture was found by Shitov \cite{shitov2019}.

Although Hedetniemi's conjecture has been disproved, we believe that it makes sense to study the techniques around it and continue establishing its validity under additional assumptions. 

As for positive or conditionally positive results around Hedetniemi's conjecture, we must mention the connection \cite{wrochna2019} between this conjecture and the conjecture about the mapping index of polyhedra with free involution, $\ind X\times Y = \min\{\ind X, \ind Y\}$, see the textbook \cite{matousek2003using} or Section \ref{section:mapping-index} below for the definition of the mapping index.

Shitov's result shows that computing the chromatic number of the categorical product of graphs is not an easy question, the same, in our opinion, applies to the mapping index of the product conjecture. We believe so because the mapping index is intrinsically connected to the notion of higher topological obstructions and higher homotopy groups of spheres, the topics that after decades of study mostly remain mysterious and extremely hard to handle. In view of this we somehow switch to ``changing the definition'' and show ``the right form'' of these results, replacing the mapping index by the homological index \cite{yang1954,yang1955,matousek2003using}. In particular, for the homological index of polyhedra with free involutions we are able to prove 
\begin{equation}
\label{equation:hind-prod}
\hind X\times Y = \min\{\hind X, \hind Y\},
\end{equation}
see Section \ref{section:h-ind-2} below for the details. We should emphasize that we have found three different ways to prove \eqref{equation:hind-prod}, each of them demonstrating a different technique. We present one method in this paper, the one which we believe is shorter and proving other useful formulas along the way. For the other proofs of \eqref{equation:hind-prod}, see \cite{daneshpajouh2018hedetniemi}.

Formula \eqref{equation:hind-prod} automatically implies the validity of Hedetniemi's conjecture for graphs, whose homological lower bound for the chromatic number, the homological index of the box complex plus $2$, is tight and equals the actual chromatic number. Note that since the founding works of Lov\'asz \cite{lovasz1978} (see also the textbook \cite{matousek2003using}) the standard way to establish lower bounds on the chromatic number of a graph $H$ was through establishing lower bounds for the mapping index of its so-called box complex $\mathcal B(H)$. Moreover, there is the inequality between the homological index and the mapping index, $\hind X \le \ind X$, and in all practical cases what was bounded from below was the homological index $\hind \mathcal B(H)$. In addition, there are a lot of examples, Kneser graphs and alike, where the homological estimate for the chromatic number is tight.

One may even define the ``homological chromatic number'' of a graph $G$ as $\hind \mathcal B(H) + 2$, which serves as the lower bound for the honest chromatic number and for which Hedetniemi's conjecture holds true, in view of \eqref{equation:hind-prod}, which is explained in Section \ref{section:hom-chromatic}. On the way of developing this topological technique, we also prove the formula for the homological index of a join in Theorem \ref{theorem:join-index-2}, which may be of independent interest and which directly implies \eqref{equation:hind-prod}. 

We also establish a slightly weaker form of \eqref{equation:hind-prod} for the free action of prime cyclic groups $\mathbb Z/p$ (with odd prime $p$) on polyhedra, see Corollary \ref{corollary:product-index-p}. This leads to a topological lower bound on the categorical product of hypergraphs; introduced by Zhu \cite{zhu1992}.


\subsection{Acknowledgment}
The first author wishes to thank Professor Hossein Hajiabolhassan for drawing the author's attention to Hedetniemi's conjecture.

\section{Homological index of the join and the product}
\label{section:h-ind-2}

\subsection{Definition of the homological index}

Let us recall the standard definition. Speaking about topological spaces, let us restrict ourselves to polyhedra (simplicial complexes), which is sufficient in the application to graphs and their chromatic number. Throughout this paper, $G$ stands for a finite non-trivial group. Whenever such a group act on a polyhedron $X$, we assume there exists a finite triangulation of $X$ invariant with respect to the group action.

For $G$-spaces $X$ and $Y$, its Cartesian product $X\times Y$ is always considered as a $G$-space equipped with the diagonal action, i.e., $g\cdot (x,y)\mapsto (gx, gy)$. The join $X\ast Y$ of two topological spaces $X$ and $Y$ is a the quotient space $X\times Y\times [0,1]/\sim$ where $\sim$ is an equivalence relation generated by $(x, y_1, 0)\sim (x, y_2, 0)$ for all $x\in X$, and $y_1, y_2\in Y$ and $(x_1, y, 1)\sim (x_2, y, 1)$ for all $x_1, x_2\in X$ and $y\in Y$. For convenience, we use the notation $(1-t)x\oplus ty$ for $[x,y,t]\in X\ast Y$. Finally, if
$X$ and $Y$ are $G$-spaces, then $X\ast Y$ is considered as a $G$-space whose its action is given by $g\cdot \left((1-t)x\oplus ty\right)\mapsto (1-t)(gx)\oplus t(gy)$.

\begin{definition}
For a polyhedron $X$ with a free action of a finite group $G$ there exists a unique up to $G$-equivariant homotopy map $X \to EG$ to the classifying space of free $G$-actions. It gives rise to the cohomology map
\[
H_G^*(EG; M) = H^*(BG; M)\to H_G^*(X; M) = H^*(X/G; M),
\]
where $BG = EG/G$ and $M$ is a $G$-module.
\end{definition}

We refer the reader to the textbooks \cite{matousek2003using,hsiang1975} for more detailed information on these notions. Now we restrict our attention to the free action of the group $\mathbb Z/p$ of prime order.

\begin{definition}
For a polyhedron $X$ with a free action of $\mathbb Z/p$, the homological index of $X$ is
\[
\hind X = \max\{k\where H^k(BG;\mathbb F_p) \to H^k(X/G;\mathbb F_p)\quad\text{is nonzero}\},
\]
where $\mathbb F_p$ is the field of order $p$.
\end{definition}

The crucial property of the homological index is its monotonicity, the existence of an equivariant map $X\to Y$ implies $\hind X \le \hind Y$. It is clear from the definition.

\subsection{The Smith exact sequences and the transfer}

Let $X$ be a polyhedron (simplicial complex) with a free action of a cyclic group $G=\mathbb Z/p$. Denote by $(C_*(X;\mathbb F_p),\partial)$ the chain complex of oriented simplices of $X$ with coefficients in the field $\mathbb F_p$. Recall that $C_k(X;\mathbb F_p)$ consists of all sums $\sum n_i c_i$ where $n_i\in \mathbb F_p$, and $c_i$ are faces of $X$ of dimension $k$. Put 
\[
N:=\sum\limits_{g\in G} g = 1 + T + \dots + T^{p-1}\quad\text{and}\quad\rho:=1-T,
\]
where $T$ is a generator of the cyclic group $G$. It is known (see~\cite{bredon1972}) that $C_k(X/G;\mathbb F_p)$ can be identified with the space of $G$-invariant chains $NC_k(X;\mathbb F_p)$ and we have for $p=2$ (in this case $N=\rho=1+T$) an exact triple of complexes:
\[
0\to NC_*(X;\mathbb F_2)\to C_*(X;\mathbb F_2)\to NC_*(X;\mathbb F_2)\to 0
\]
and two exact triples for $p>2$:
\[
\begin{aligned}
0\to NC_*(X;\mathbb F_p)\to C_*(X;\mathbb F_p)\to \rho C_*(X;\mathbb F_p)\to 0; \\
0\to \rho C_*(X;\mathbb F_p)\to C_*(X;\mathbb F_p)\to NC_*(X;\mathbb F_p)\to 0
\end{aligned}
\]
These and induced long homology sequences, called \emph{the Smith sequences}, are functorial in $X$. For the equivariant homology we have: $H_*(NC_*(X;\mathbb F_2))=H_*(X/G;\mathbb F_p)$. Hence for $p=2$ we have a long exact sequence of homology groups:
\[
\dots\to H_n(X/G;\mathbb F_2)\to H_n(X;\mathbb F_2) \to H_n(X/G;\mathbb F_2) \to H_{n-1}(X/G;\mathbb F_2)\to \dots.
\]
For $p>2$ there are two long homology Smith sequences. The homology of the complex $\rho C_*(X;\mathbb F_p)$ is usually denoted by $H^{\rho}_*(X)$ and called \emph{the Smith special homology group}.

Note that any invariant (under the action of $G$) cycle belongs to $NC_*(X;\mathbb F_p)$ and therefore defines two homology classes, of the space $X/G$ and and of the space $X$. There arises the homomorphism $H_n(X/G;\mathbb F_p)\to H_n(X;\mathbb F_p)$, which is called \emph{the transfer homomorphism}.

\subsection{Homology of the join}

Now consider the join of two polyhedra $X$ and $Y$. Let $c=[a_0,a_1,\dots,a_n]$ and $d=[b_0,b_1,\dots,b_m]$ be faces in $X$ and $Y$ of dimensions $n$ and $m$ respectively, where $a_i$ are vertices of $X$ and $b_j$ are vertices of $Y$. Then the join of those faces $c*d=[a_0,a_1,\dots,a_n,b_0,b_1,\dots,b_m]$ is an $(n+m+1)$-dimensional face of $X*Y$ and we obviously have the boundary formula
\[
\partial (c*d)=(\partial c)*d+(-1)^{n+1}c*(\partial d).
\]
From this formula it follows that the join of cycles is a cycle and the join of a boundary with a cycle is a boundary, hence there is a natural homomorphism
\[
H_n(X;\mathbb F_p)\otimes H_m(Y;\mathbb F_p)\to H_{n+m+1}(X*Y;\mathbb F_p).
\]
Moreover, since we have the field as coefficients in homology, the reduced homology groups $\widetilde H_{*}(X*Y;\mathbb F_p)$ can be described due to the isomorphism~\cite{milnor1956}:
\[
\widetilde H_{s+1}(X*Y;\mathbb F_p)=\sum_{k+l=s} \widetilde H_k(X;\mathbb F_p)\otimes \widetilde H_l(Y;\mathbb F_p).
\]

Now assume that $\dim X=n$ and $\dim Y=m$, where $n$ and $m$ are positive. 

Then $\dim X*Y=n+m+1$ and 
\[
H_{n+m+1}(X*Y;\mathbb F_2)=H_n(X;\mathbb F_2)\otimes H_m(Y;\mathbb F_2).
\]
Note that in the top dimension the homology group can be identified with the group of cycles, so every nontrivial cycle defines a nonzero homological class. In the presence of an $G=\mathbb Z/p$ action, it follows that the inclusion $NC_n(X;\mathbb F_2)\to C_n(X;\mathbb F_2)$ defines a monomorphism 
\[
H_n(X/G;\mathbb F_2)\to H_n(X;\mathbb F_2).
\] 
Similarly, for spaces $Y$ and $X*Y$ in dimensions $m$ and $n+m+1$ there exists the similar monomorphisms. If we have nontrivial cycles $c\in C_n(X;\mathbb F_2)$ and $d\in C_m(Y;\mathbb F_2)$ then $c*d$ is a nontrivial $(n+m+1)$-dimensional cycle of $X*Y$, i.e. a nontrivial homology class. Moreover, if cycles $c$ and $d$ are invariant (under the $G$-action) then $c*d$ is also invariant.

\subsection{Subadditivity of the homological index}

We are going to use the property of the subadditivity of the homological index for $G=\mathbb Z/p$. Assume that a $G$-space $X$ is covered with its $G$-invariant subpolyhedra $X=A\cup B$. Choose a partition of unity $\phi+\psi = 1$ on $X$, which is $G$-invariant and subordinated to the considered covering. Then the map
\[
x\mapsto \phi(x)x\oplus \psi(x)x
\]
can be considered as a continuous $G$-equivariant map $X\to A*B$. Hence in order to estimate the index of $X$ from above using the indexes of $A$ and $B$, it is sufficient to estimate the index of $A*B$ from above. 

The needed information on subadditivity is collected in the following lemma, copied from \cite[Proposition 3.6]{volovikov2005}:

\begin{lemma}
\label{lemma:subadditive}
For a group $G=\mathbb Z/p$ and spaces it acts on, we have that
\[
\hind A\cup B \le \hind A*B \le \hind A + \hind B + 1,
\]
assuming $p=2$ or one of $\hind A$ and $\hind B$ is odd. Without the additional assumptions there holds 
\[
\hind A\cup B \le \hind A*B \le \hind A + \hind B + 2.
\]
\end{lemma}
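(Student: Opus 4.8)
The first inequality is immediate: the partition-of-unity map $X=A\cup B\to A*B$ constructed just above is $G$-equivariant, so monotonicity of $\hind$ gives $\hind(A\cup B)\le\hind(A*B)$. The content is the upper bound, which I would obtain from a relative cup product argument exploiting the ring structure of $H^*(BG;\mathbb F_p)$. For a free $G$-space $Z$ write $r_Z\colon H^*(BG;\mathbb F_p)\to H^*(Z/G;\mathbb F_p)$ for the ring homomorphism induced by the classifying map; by definition $\hind Z$ is the top degree in which $r_Z$ is nonzero, so $r_Z$ annihilates the generator of $H^{k}(BG;\mathbb F_p)$ for every $k>\hind Z$. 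Now decompose the join as $X=A*B=C_A\cup C_B$, where $C_A=\{(1-t)a\oplus tb : t\le 1/2\}$ and $C_B=\{(1-t)a\oplus tb : t\ge 1/2\}$ are the two $G$-invariant halves; these retract $G$-equivariantly onto $A$ and $B$, while $C_A\cup C_B=X$. Since the actions are free, $H_G^*(\,\cdot\,;\mathbb F_p)=H^*(\,\cdot/G;\mathbb F_p)$, and after passing to quotients everything becomes ordinary relative cohomology of the excisive triad $(X/G;\,C_A/G,\,C_B/G)$. The structural input is the relative cup product $H^p(X/G,C_A/G;\mathbb F_p)\otimes H^q(X/G,C_B/G;\mathbb F_p)\to H^{p+q}(X/G,\,X/G;\mathbb F_p)=0$, compatible with the absolute product under the forgetful maps.

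Put $a=\hind A$ and $b=\hind B$. The generator $x$ of $H^{a+1}(BG;\mathbb F_p)$ satisfies $r_A(x)=0$; since the restriction $H^{a+1}(X/G)\to H^{a+1}(A/G)$ factors through $H^{a+1}(C_A/G)$, the class $r_X(x)$ lifts to a relative class $\alpha\in H^{a+1}(X/G,C_A/G;\mathbb F_p)$. Symmetrically the generator $y$ of $H^{b+1}(BG;\mathbb F_p)$ lifts to $\beta\in H^{b+1}(X/G,C_B/G;\mathbb F_p)$. Then $\alpha\cup\beta\in H^{a+b+2}(X/G,X/G;\mathbb F_p)=0$, while under the forgetful map $\alpha\cup\beta$ maps to $r_X(xy)$. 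Hence $r_X(xy)=0$, and if $xy$ is a generator of $H^{a+b+2}(BG;\mathbb F_p)$ this yields $\hind X\le a+b+1$.

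Everything now reduces to the ring $H^*(BG;\mathbb F_p)$. For $p=2$ it is $\mathbb F_2[w]$ with $\deg w=1$, so $xy=w^{a+b+2}$ is always a generator and the bound $a+b+1$ holds unconditionally. For odd $p$ it is $\Lambda(u)\otimes\mathbb F_p[v]$ with $\deg u=1$, $\deg v=2$, $u^2=0$; here $xy$ is a generator of $H^{a+b+2}(BG;\mathbb F_p)$ unless both $a+1$ and $b+1$ are odd, i.e. unless $a$ and $b$ are both even, in which case $xy$ is proportional to $u^2=0$ and the argument is vacuous. Thus when $p=2$ or one of $a,b$ is odd we get the sharp bound $a+b+1$. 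In the sole remaining case ($p$ odd, $a,b$ both even) I would instead lift the generator of $H^{a+1}(BG;\mathbb F_p)$ over $C_A$ and the generator of $H^{b+2}(BG;\mathbb F_p)$ over $C_B$, the latter still annihilated by $r_B$ because $b+2>b$; now the factors have degrees $a+1$ (odd) and $b+2$ (even), so their product is $uv^{(a+b+2)/2}$, a generator of $H^{a+b+3}(BG;\mathbb F_p)$, forcing $\hind X\le a+b+2$.

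The main obstacle is precisely this odd-$p$ degree bookkeeping: the relation $u^2=0$ is what separates the $+1$ bound from the $+2$ bound, and one must choose the degrees of the two lifted classes so that their product avoids $u^2$. The remaining points—that the halves $C_A,C_B$ retract equivariantly onto $A,B$ and form an excisive pair so that the relative cup product is defined and the identification $H_G^*=H^*(\,\cdot/G)$ applies—are routine once a finite invariant triangulation of $X$ is fixed (or, if preferred, after thickening $C_A,C_B$ to open invariant neighborhoods).
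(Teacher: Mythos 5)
Your write-up cannot be compared against an internal argument, because the paper does not prove this lemma at all: it is imported verbatim as Proposition~3.6 of \cite{volovikov2005}. What you have produced is in effect a self-contained reconstruction of the cited result, and it follows the classical relative cup-product (ideal-valued index) scheme going back to Conner--Floyd and Yang: split the join into the two invariant halves $C_A,C_B$ retracting equivariantly onto $A$ and $B$, lift the images of generators of $H^*(BG;\mathbb F_p)$ to relative classes, and annihilate their product in $H^*(X/G,X/G)=0$. Your degree bookkeeping in $H^*(B\mathbb Z/p;\mathbb F_p)=\Lambda(u)\otimes\mathbb F_p[v]$ is exactly right, and it correctly isolates the role of the parity hypothesis: the product of the two lifted generators dies against $u^2=0$ precisely when $\hind A$ and $\hind B$ are both even, and your device of raising one lift to degree $\hind B+2$ in that case is what yields the weaker $+2$ bound. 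The treatment of the easy inequality via the partition-of-unity map and monotonicity matches the paper's setup.

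One step should be made explicit. From $r_X(xy)=0$ in degree $a+b+2$ you conclude $\hind X\le a+b+1$, but $\hind$ is defined as a \emph{maximum}, so you need that vanishing of $r_X$ in one degree forces vanishing in all higher degrees. For $p=2$ this is immediate from multiplicativity, since $H^*(B\mathbb Z/2;\mathbb F_2)=\mathbb F_2[w]$ and $r_X(w^k)=r_X(w)^k$. For odd $p$ multiplicativity alone does not suffice: vanishing on the odd-degree generator $uv^j$ does not formally kill $v^{j+1}$, since $v^{j+1}$ is not a ring multiple of $uv^j$. The missing ingredient is that $\ker r_X$ is closed under the Bockstein $\beta$ (the classifying map is a map of spaces, so $r_X$ commutes with $\beta$, and $\beta u=v$): applying $\beta$ to $r_X(u)r_X(v)^j=0$ gives $r_X(v)^{j+1}=0$, after which all higher degrees die by multiplicativity. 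With that one-line supplement, both your $+1$ and $+2$ conclusions (the latter from vanishing in degree $a+b+3$) are complete, and your proof is correct.
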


\subsection{Spaces with $\mathbb Z/2$ action}

Now fix $G=\mathbb Z/2$ and establish the formula for the homological index of the join and the product of two $\mathbb Z/2$-spaces. 

\begin{theorem}
\label{theorem:join-index-2}
Let $X$ and $Y$ be polyhedra with free actions of $\mathbb Z/2$, then we have for their join
\[
\hind X*Y = \hind X + \hind Y + 1.
\]
\end{theorem}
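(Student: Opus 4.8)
The plan is to prove the equality $\hind X*Y = \hind X + \hind Y + 1$ by establishing the two inequalities separately. For the upper bound, I would invoke Lemma \ref{lemma:subadditive}: since $p=2$, the additional assumption is automatically satisfied, so $\hind X*Y \le \hind X + \hind Y + 1$ follows immediately. Thus the entire content of the theorem is the reverse (lower bound) inequality $\hind X*Y \ge \hind X + \hind Y + 1$, and I would devote the rest of the argument to it.

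For the lower bound, the natural approach is via the cohomological characteristic class. Recall that for $G=\mathbb Z/2$ the homological index is detected by powers of the generator $w \in H^1(BG;\mathbb F_2)$ pulled back to $H^*(X/G;\mathbb F_2)$: by definition $\hind X = k$ means $w^k$ maps to a nonzero class in $H^k(X/G;\mathbb F_2)$ while $w^{k+1}$ maps to zero. The key step is to relate the characteristic class of the join $X*Y$ to those of $X$ and $Y$. Writing $\hind X = n$ and $\hind Y = m$, I would produce explicit nonzero invariant cocycles (dually, cycles) detecting $w^n$ on $X/G$ and $w^m$ on $Y/G$, and then show that their join detects $w^{n+m+1}$ on $(X*Y)/G$. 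Concretely, the subsection on the homology of the join already gives the machinery: invariant cycles $c$ and $d$ of the appropriate dimensions join to an invariant nonzero cycle $c*d$, and the Smith-sequence/transfer apparatus relates top-dimensional invariant cycles to nonvanishing of the characteristic class.

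The cleanest route, which I would prefer, is to reduce to the top-dimensional case discussed just before the theorem. If $\hind X = n$, then on the $n$-skeleton (or an appropriate $n$-dimensional invariant subcomplex) the class $w^n$ is carried by a genuine top-dimensional invariant cycle; the paragraph preceding the theorem shows that in top dimension homology equals cycles, the inclusion $NC_n \to C_n$ is injective on homology, and invariant cycles join to invariant cycles. So I would first replace $X$ and $Y$ by invariant subpolyhedra of dimensions exactly $n$ and $m$ that still realize the respective indices (possible because the index is monotone and detected on skeleta), then form $c*d \in NC_{n+m+1}(X*Y;\mathbb F_2)$, a nonzero invariant top cycle of the join. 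Finally I would argue that the nonvanishing of this invariant top cycle forces the characteristic class $w^{n+m+1}$ to be nonzero in $H^{n+m+1}((X*Y)/G;\mathbb F_2)$, using the Smith exact sequence: the connecting map and the transfer identify the survival of $w^{n+m+1}$ with the existence of a nonzero invariant cycle in the top dimension.

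The main obstacle I anticipate is the passage from ``there exists a nonzero invariant top-dimensional cycle'' to ``the characteristic class $w^{n+m+1}$ is nonzero,'' i.e.\ making precise the duality between the cycle-level construction of $c*d$ and the cohomological index. One must check that the invariant cycle genuinely detects the top power of $w$ rather than merely living in homology, which requires tracing through the Smith sequence to see that the transfer $H_{n+m+1}((X*Y)/G) \to H_{n+m+1}(X*Y)$ is injective in the top dimension and that its image pairs nontrivially with the pullback of $w^{n+m+1}$. A subtle point is justifying the reduction to exactly $n$- and $m$-dimensional subpolyhedra realizing the index without loss; handling this carefully, rather than hand-waving ``pass to skeleta,'' is where the argument demands the most care.
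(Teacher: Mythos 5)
Your upper bound and the reduction to $\dim X = n = \hind X$, $\dim Y = m = \hind Y$ via skeleta match the paper exactly (the paper justifies the skeleton step just as you indicate: $H_n(X^{(n)}/G;\mathbb F_2)\to H_n(X/G;\mathbb F_2)$ is surjective, so the adjoint cohomology map is injective and $\hind X^{(n)} = n$). The gap is in your detection step. The equivalence you lean on --- that ``the connecting map and the transfer identify the survival of $w^{n+m+1}$ with the existence of a nonzero invariant cycle in the top dimension'' --- is false. A free $\mathbb Z/2$-space can carry a nonzero invariant top-dimensional cycle while its homological index is $0$: take two disjoint copies of a closed $n$-manifold swapped by the involution; the sum of the two fundamental cycles is a nonzero invariant top cycle, the transfer is injective in the top dimension (top homology equals cycles, exactly as in the paper), and yet $\hind = 0$ because the space maps equivariantly to $S^0$. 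So the injectivity of the transfer in top dimension, which is all your Smith-sequence argument actually delivers, cannot force $\langle$ image of $w^{n+m+1}$, $[c*d]\rangle \neq 0$. What you really need is a multiplicativity statement: if $[c]$ detects $w^n$ on $X/G$ and $[d]$ detects $w^m$ on $Y/G$, then $[c*d]$ detects $w^{n+m+1}$ on $(X*Y)/G$. This does not follow from the join formula for homology you cite (Milnor's formula computes $H_*(X*Y)$, not $H_*\bigl((X*Y)/G\bigr)$, and the quotient of a join is not the join of the quotients), and your proposal supplies no computation of how the Smith connecting homomorphism behaves on joins. You correctly flagged this as the main obstacle, but the mechanism you propose to close it is precisely the false equivalence above.

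The paper closes exactly this gap with an idea absent from your proposal: after the skeleton reduction, obstruction theory (dimension considerations) gives equivariant simplicial maps $X\to S^n$ and $Y\to S^m$, since an equivariant map to $E_k(\mathbb Z/2)=S^k$ exists from any free complex of dimension at most $k$. Because $\hind X = n$, the map $H_n(X/G;\mathbb F_2)\to H_n(\mathbb RP^n;\mathbb F_2)$ is surjective, so one can choose invariant cycles $\gamma$, $\delta$ mapping onto the fundamental cycles $\alpha$ of $S^n$ and $\beta$ of $S^m$; then $\gamma*\delta$ maps onto $\alpha*\beta$, the fundamental cycle of $S^{n+m+1}$. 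Functoriality makes $H^{n+m+1}(\mathbb RP^{n+m+1};\mathbb F_2)\to H^{n+m+1}\bigl((X*Y)/G;\mathbb F_2\bigr)$ injective, and since $\hind S^{n+m+1}=n+m+1$ is known by connectivity, $\hind X*Y\ge n+m+1$. In short, the paper performs the detection on the sphere, where it is trivial, and transports it back along the map; your plan attempts the detection intrinsically on $X*Y$, and to rescue it you would need a genuine additional argument --- for instance Yang's description of $\langle w^k, z\rangle$ via iterated Smith connecting homomorphisms together with a join formula for those connecting maps --- not merely the transfer injectivity you invoke.
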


\begin{proof}
The upper bound on $\hind X*Y$ is the subadditivity Lemma \ref{lemma:subadditive}, hence we concentrate on proving the lower bound. We first reduce the problem to the case $\dim X=n=\hind X$ and $\dim Y=m=\hind Y$. Indeed, taking the $n$-skeleton of $X$, we have that the map $H_n(X^{(n)}/G;\mathbb F_2)\to H_n(X/G;\mathbb F_2)$ is surjective and the adjoint cohomology map is injective. Hence the homological index $\hind X^{(n)}$ is not less than the index $\hind X$, but since it cannot be greater than the dimension, we have $\hind X^{(n)} = n$. The same can be done to $Y$ and the reduction is clear in view of the monotonicity $\hind X^{(n)}*Y^{(m)} \le \hind X*Y$.

Consider equivariant simplicial maps $X\to S^n$, $Y\to S^m$. Possibly after a subdivision of $X$, $Y$, $S^n$, and $S^m$, such maps exist from the dimension considerations in the obstruction theory.

Denote by $\alpha$ the sum of all $n$-dimensional simplicies of a $G$-invariant triangulation of $S^n$ and similarly denote be $\beta$ the sum of all $m$-dimensional simplicies of a $G$-invariant triangulation of $S^m$; that is the cycles representing the fundamental class of the manifolds $S^n$ and $S^m$. 

Then $\alpha$ and $\beta$ are invariant cycles and hence they give us generators of the groups 
\[
\begin{aligned}
H_n(S^n;\mathbb F_2)&=\mathbb F_2,\quad H_n(S^n/G;\mathbb F_2)=H_n(\mathbb RP^n;\mathbb F_2)=\mathbb F_2\,\, \mbox{ and }\\
H_m(S^m;\mathbb F_2)&=\mathbb F_2,\quad H_m(S^n/G;\mathbb F_2)=H_m(\mathbb RP^m;\mathbb F_2)=\mathbb F_2.
\end{aligned}
\] 
The invariant cycle $\alpha*\beta$ is a sum of all simplices for $S^{m+n+1}$ (here we do not use the orientation, but if we did, the orientations would be matching), hence $\alpha*\beta$ gives a generator of $H_{n+m+1}(S^{n+m+1};\mathbb F_2)=\mathbb F_2$.

Since $\dim X=n=\hind X$ and $\dim Y=m=\hind Y$, there exist an invariant $n$-dimensional cycle $\gamma$ of $X$ whose image in $S^n$ equals $\alpha$, and an invariant $m$-dimensional cycle $\delta$ of $Y$, whose image in $S^m$ equals $\alpha$. The join of the maps $X*Y\to S^n*S^m$ maps the invariant cycle $\gamma *\delta$ to $\alpha*\beta$. Hence the homology map 
\[
H_{n+m+1} (X*Y/G; \mathbb F_2) \to H_{n+m+1} (S^{n+m+1}/G; \mathbb F_2)
\]
is surjective, while the adjoint cohomology map is injective. This implies 
\[
\hind X*Y\ge \hind S^{n+m+1} = m+n+1,
\] 
and since $\dim X*Y=n+m+1$, we obtain $\hind X*Y= m+n+1$.
\end{proof}

\begin{corollary}
\label{corollary:prod-index-2}
Let $X$ and $Y$ be polyhedra with free actions of $\mathbb Z/2$, then we have for their product
\[
\hind X\times Y = \min\{\hind X, \hind Y\}.
\]
\end{corollary}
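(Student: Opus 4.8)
The plan is to prove the two inequalities separately; the upper bound is immediate, and all the work is in the lower bound. For the upper bound, the projections $X\times Y\to X$ and $X\times Y\to Y$ are $\mathbb Z/2$-equivariant (the action on the product is diagonal), so the monotonicity of the homological index noted after its definition gives $\hind X\times Y\le\hind X$ and $\hind X\times Y\le\hind Y$, hence $\hind X\times Y\le\min\{\hind X,\hind Y\}$.

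For the lower bound, set $n=\min\{\hind X,\hind Y\}$ and write $u\in H^1(X/G;\mathbb F_2)$, $v\in H^1(Y/G;\mathbb F_2)$ for the images of the generator of $H^1(BG;\mathbb F_2)$; by the definition of $\hind$ we have $u^n\neq 0$ and $v^n\neq 0$. The first step is a geometric identification: the map $\pi\colon (X\times Y)/G\to X/G\times Y/G$, $[(x,y)]\mapsto([x],[y])$, is a double cover, and a short monodromy check shows its classifying class is $u\otimes 1+1\otimes v$, while the index class $w_{X\times Y}\in H^1((X\times Y)/G)$ equals $\pi^*(u\otimes 1)=\pi^*(1\otimes v)$. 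Thus $w_{X\times Y}^{\,n}=\pi^*(u^n)$, and by the cohomological Smith--Gysin sequence of this double cover (the cohomology counterpart of the Smith sequences recalled above) the kernel of $\pi^*$ in degree $n$ is exactly $(u+v)\cdot H^{n-1}(X/G\times Y/G)$. So it suffices to prove the non-divisibility statement $u^n\notin (u+v)\,H^{n-1}(X/G\times Y/G)$, which by the K\"unneth isomorphism $H^*(X/G\times Y/G)=H^*(X/G)\otimes H^*(Y/G)$ becomes purely algebraic.

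The hard part, and the heart of the proof, is this non-divisibility. I would argue by contradiction: suppose $u^n=(u+v)\eta$ and decompose $\eta=\sum_{a+b=n-1}\eta_{a,b}$ into K\"unneth bidegrees, $\eta_{a,b}\in H^a(X/G)\otimes H^b(Y/G)$. Comparing bidegrees of $u^n=u\eta+v\eta$ yields the cascade $v\eta_{0,n-1}=0$, then $v\eta_{a,n-1-a}=u\eta_{a-1,n-a}$ for $1\le a\le n-1$, and finally $u\eta_{n-1,0}=u^n$. A straightforward induction gives $v^a\eta_{a,n-1-a}=u^a\eta_{0,n-1}$; taking $a=n-1$ and multiplying by $u$ produces $u^n v^{n-1}=u^n\eta_{0,n-1}$, and multiplying once more by $v$ and invoking $v\eta_{0,n-1}=0$ gives $u^n v^n=u^n\cdot v\eta_{0,n-1}=0$. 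But under K\"unneth $u^n v^n$ corresponds to $u^n\otimes v^n$, which is nonzero since $u^n$ and $v^n$ are nonzero over a field --- a contradiction. Hence $w_{X\times Y}^{\,n}\neq 0$, so $\hind X\times Y\ge n$, and together with the upper bound this proves $\hind X\times Y=\min\{\hind X,\hind Y\}$.

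I would remark that this route is logically self-contained and does not invoke the join formula of Theorem~\ref{theorem:join-index-2}; the anticipated obstacle is precisely the algebraic non-divisibility lemma above, where one must resist trying to conclude equality of the $\eta_{a,b}$ (which fails when the cohomology of $X/G$ or $Y/G$ is not a truncated polynomial ring) and instead exploit the bidegree cascade to reach the single clean identity $u^n v^n=0$. As an alternative I would expect the same Smith-sequence machinery applied to the Mayer--Vietoris decomposition $X*Y=C_X\cup C_Y$, with $C_X\simeq X$, $C_Y\simeq Y$ and $C_X\cap C_Y=X\times Y$, to deduce the lower bound directly from the join formula, but the computation above appears shorter.
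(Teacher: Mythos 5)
Your proof is correct, and it takes a genuinely different route from the paper's. The paper deduces the corollary from the join formula of Theorem~\ref{theorem:join-index-2} together with the subadditivity Lemma~\ref{lemma:subadditive}: covering the join $X*Y$ by the set $t\neq 1/2$ (equivariantly deformable to $X\sqcup Y$) and the set $t\in(1/3,2/3)$ (equivariantly deformable to $X\times Y$) yields $\hind X*Y\le \hind X\sqcup Y+\hind X\times Y+1$, and inserting $\hind X*Y=\hind X+\hind Y+1$ and $\hind X\sqcup Y=\max\{\hind X,\hind Y\}$ gives the lower bound --- this is essentially the Mayer--Vietoris alternative you sketch in your closing remark. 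Your main argument instead stays entirely in cohomology: you identify $\pi\colon (X\times Y)/G\to X/G\times Y/G$ as the double cover classified by $u\otimes 1+1\otimes v$, note $w_{X\times Y}=\pi^*(u\otimes 1)$ by naturality under the equivariant projection $X\times Y\to X$, use the mod~$2$ Gysin sequence to identify the kernel of $\pi^*$ in degree $n$ with the image of cup product with $u\otimes 1+1\otimes v$, and settle the resulting non-divisibility by the bidegree cascade; I checked the cascade and it is sound (in characteristic $2$ the operators $u\otimes 1$ and $1\otimes v$ commute, the induction $v^a\eta_{a,n-1-a}=u^a\eta_{0,n-1}$ goes through without assuming anything about the individual $\eta_{a,b}$, and $u^n\otimes v^n\neq 0$ by the K\"unneth theorem over a field, which also covers disconnected quotients). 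Comparing the two: the paper's route proves along the way the stronger join formula, which is of independent interest and partially survives for $\mathbb Z/p$ (Theorem~\ref{theorem:join-index-p}), whence the parity-restricted Corollary~\ref{corollary:product-index-p}; your route is self-contained for $p=2$ --- it needs neither the obstruction-theoretic maps to spheres and invariant fundamental cycles used in the proof of Theorem~\ref{theorem:join-index-2} nor the subadditivity lemma imported from \cite{volovikov2005} --- but it does not extend verbatim to odd $p$, where the relevant cover has degree $p$, $H^*(B\mathbb Z/p;\mathbb F_p)$ has generators in degrees $1$ and $2$, and the kernel of the analogous $\pi^*$ is no longer a principal ideal in this simple sense; this limitation is consistent with the parity assumptions in the paper's $\mathbb Z/p$ statements. (The paper remarks that three different proofs of \eqref{equation:hind-prod} were found, citing \cite{daneshpajouh2018hedetniemi}; your Gysin-sequence argument may well be close to one of those, but it is not the proof given in this paper.)
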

\begin{proof}
Considering the points of the join $X*Y$ in the form $(1-t)x\oplus ty$, we cover the joint with two open sets: one with $t\neq 1/2$, and the other with $t\in(1/3, 2/3)$. The first set equivariantly deforms to the disjoint union $X\sqcup Y$ with 
\[
\hind X\sqcup Y = \max\{\hind X, \hind Y\}
\]
right from the definition of the homological index. The second set equivariantly deforms to $X\times Y$. By the subadditivity of the homological index we have
\[
\hind X*Y \le \hind X\sqcup Y + \hind X\times Y + 1.
\]
Inserting the lower bound for $\hind X*Y$, we then obtain
\[
\hind X\times Y \ge \hind X + \hind Y - \max\{\hind X, \hind Y\} = \min\{\hind X, \hind Y\}.
\]
The reverse inequality evidently holds from the existence of equivariant maps $X\times Y\to X$ and $X\times Y\to Y$.
\end{proof}

\subsection{Spaces with $\mathbb Z/p$ action}

The methods of the previous section, in a certain part, can be extended to the case of the group $G=\mathbb Z/p$ with odd $p$.

\begin{theorem}
\label{theorem:join-index-p}
Let $X$ and $Y$ be polyhedra with free actions of $\mathbb Z/p$, when $p$ is an odd prime. If $\hind X$ and $\hind Y$ are odd then for the index of the join we have
\[
\hind X*Y = \hind X + \hind Y + 1.
\]
\end{theorem}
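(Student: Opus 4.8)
The plan is to run the scheme of the proof of Theorem \ref{theorem:join-index-2} verbatim, the one genuinely new point being that the hypothesis that $\hind X$ and $\hind Y$ are odd is precisely what makes the model spheres in that argument carry \emph{free} $\mathbb Z/p$-actions. The upper bound $\hind X*Y \le \hind X + \hind Y + 1$ is immediate from the subadditivity Lemma \ref{lemma:subadditive}, whose extra hypothesis ``one of the indices is odd'' holds here, so the whole task is the matching lower bound. First I would reduce to $\dim X = n = \hind X$ and $\dim Y = m = \hind Y$ exactly as before: passing to the $n$-skeleton preserves the free action and keeps $H^n(X^{(n)}/G;\mathbb F_p)\to H^n(X/G;\mathbb F_p)$ injective (i.e. surjective on $H_n$), so $\hind X^{(n)} = n$, and the equivariant inclusion gives $\hind X^{(n)}*Y^{(m)}\le \hind X*Y$. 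Now, and this is the crux, since $n$ and $m$ are odd the spheres $S^n$, $S^m$ admit the standard free linear $\mathbb Z/p$-action (the generator acting by $e^{2\pi i/p}$ on each coordinate of $\mathbb C^{(n+1)/2}$, etc.), with lens-space quotients $S^n/G$, $S^m/G$. Being orientation-preserving, this action makes the sum $\alpha$ of coherently oriented top simplices of a $G$-invariant triangulation of $S^n$ an invariant fundamental cycle, and likewise $\beta$ for $S^m$; moreover $n+m+1$ is again odd, $S^n*S^m = S^{n+m+1}$ carries the still-free join action, and $\alpha*\beta$ is an invariant fundamental cycle of $S^{n+m+1}$.

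Next I would build the equivariant maps. Because $\pi_k(S^n) = 0$ for $k<n$, the obstruction to extending a $G$-equivariant simplicial map over each orbit of cells of dimension $\le n$ vanishes, so $f\colon X\to S^n$ and $g\colon Y\to S^m$ exist after subdivision. The classifying map $X\to EG$ factors up to equivariant homotopy through $f$, hence $H^n(BG;\mathbb F_p)\to H^n(X/G;\mathbb F_p)$ factors as the restriction $H^n(BG;\mathbb F_p)\to H^n(S^n/G;\mathbb F_p)$ followed by $\bar f^*$. For $n$ odd the restriction is an isomorphism $\mathbb F_p\to\mathbb F_p$, and since $\hind X = n$ the composite is nonzero; thus $\bar f^*$ is injective and, dually, $\bar f_*\colon H_n(X/G;\mathbb F_p)\to H_n(S^n/G;\mathbb F_p)$ is onto. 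So there is an invariant cycle $\gamma\in NC_n(X;\mathbb F_p)$ whose class maps to the generator $[\alpha]$, and symmetrically an invariant $\delta$ over $Y$ with $[\delta]\mapsto[\beta]$.

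To assemble, the join $h = f*g\colon X*Y\to S^{n+m+1}$ is equivariant and sends the invariant cycle $\gamma*\delta$ to $(f_*\gamma)*(g_*\delta)$. Since $f_*\gamma$ is equivariantly homologous to $\alpha$ and $g_*\delta$ to $\beta$, the boundary formula $\partial(c*d) = (\partial c)*d + (-1)^{\dim c+1} c*(\partial d)$ shows $(f_*\gamma)*(g_*\delta)$ is homologous to $\alpha*\beta$ (the error terms are joins of a boundary with a cycle, hence boundaries, and the odd-$p$ signs are harmless). Under $C_*(S^{n+m+1}/G;\mathbb F_p) = NC_*(S^{n+m+1};\mathbb F_p)$ the invariant cycle $\alpha*\beta$ corresponds to the fundamental cycle of the orientable lens space $S^{n+m+1}/G$, a generator of $H_{n+m+1}(S^{n+m+1}/G;\mathbb F_p)=\mathbb F_p$. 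Hence $\bar h_*$ is onto in degree $n+m+1$, so $\bar h^*$ is injective there, and composing it with the nonzero restriction $H^{n+m+1}(BG;\mathbb F_p)\to H^{n+m+1}(S^{n+m+1}/G;\mathbb F_p)$ (nonzero because $\hind S^{n+m+1}=n+m+1$) produces a nonzero map $H^{n+m+1}(BG;\mathbb F_p)\to H^{n+m+1}(X*Y/G;\mathbb F_p)$. This yields $\hind X*Y\ge n+m+1$, matching the upper bound.

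I expect the main obstacle, and the entire reason for the odd-index hypothesis, to be the very existence of a structured free $\mathbb Z/p$-action on the model sphere carrying an invariant fundamental cycle: for even $n$ there is no free $\mathbb Z/p$-action on $S^n$, so the clean join formula cannot be expected without the parity assumption. A secondary, purely bookkeeping, point is keeping track of orientations and of the $(-1)$-signs in the join boundary formula now that the coefficients are $\mathbb F_p$ with $p$ odd rather than $\mathbb F_2$.
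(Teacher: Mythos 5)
Your proposal is correct and follows essentially the same route as the paper, which likewise proves the upper bound by subadditivity and establishes the lower bound by rerunning the proof of Theorem \ref{theorem:join-index-2} with the odd spheres $S^n$, $S^m$ viewed as unit spheres in complex vector spaces carrying the free $\mathbb Z/p$-action by multiplication by $p$-th roots of unity, with equivariant maps $X\to S^n$, $Y\to S^m$ from obstruction theory and $\hind S^k = k$ from connectivity. Your write-up merely makes explicit the details the paper leaves implicit (the factoring of the classifying map through the lens space, the passage from classes to invariant cycles, and the orientation/sign bookkeeping for odd $p$), and correctly identifies the parity hypothesis as what makes the free model spheres exist.
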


\begin{proof}
The upper bound on $\hind X*Y$ is the subadditivity Lemma \ref{lemma:subadditive}, and the proof of the lower bound from Theorem \ref{theorem:join-index-2} in fact works, since the odd-dimensional spheres $S^n$ and $S^m$ can be considered unit spheres of a complex vector space, and the action of $G$ can be introduced as the multiplication by $e^{i\frac{2\pi k}{p}}$ for $k = 0,\ldots, p-1$.

The considerations of the obstruction theory give $G$-equivariant maps $X\to S^n$, $Y\to S^m$, and the homological index of any of the spheres $S^n$, $S^m$, $S^{n+m+1}$ equals its dimension because of its connectivity. 
\end{proof}

\begin{corollary}
Let $X$ and $Y$ be polyhedra with free actions of $\mathbb Z/p$, then for their product there holds
\[
\hind X\times Y = \min\{\hind X, \hind Y\},
\]
provided that $\hind X$ and $\hind Y$ are both odd.
\end{corollary}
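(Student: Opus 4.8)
The plan is to reproduce the cover-and-deform argument used for the $\mathbb{Z}/2$ case in Corollary \ref{corollary:prod-index-2}, substituting the $\mathbb{Z}/p$ join formula of Theorem \ref{theorem:join-index-p} for Theorem \ref{theorem:join-index-2} and taking care that the parity hypotheses are met everywhere they are needed. First I would write points of $X*Y$ as $(1-t)x\oplus ty$ and cover the join by the two $G$-invariant open sets $\{t\neq 1/2\}$ and $\{t\in(1/3,2/3)\}$. The first set equivariantly retracts to the disjoint union $X\sqcup Y$, whose index is $\max\{\hind X,\hind Y\}$ straight from the definition (here $(X\sqcup Y)/G=X/G\sqcup Y/G$, so the map from $H^*(BG)$ is nonzero up to the larger of the two degrees), and the second retracts to the product $X\times Y$.

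Next I would apply the subadditivity Lemma \ref{lemma:subadditive} to this cover. Here the crucial point, and the one place where the hypothesis is genuinely used beyond invoking the join formula, is the parity condition guarding the sharp inequality: for $p>2$ one obtains $\hind A*B\le \hind A+\hind B+1$ rather than the weaker $+2$ only when one of the two summands is odd. Taking $A=X\sqcup Y$ and $B=X\times Y$, I observe that since $\hind X$ and $\hind Y$ are both odd their maximum is odd, so $\hind(X\sqcup Y)=\max\{\hind X,\hind Y\}$ is odd and the sharp bound is available. This yields
\[
\hind X*Y \le \hind X\sqcup Y + \hind X\times Y + 1.
\]

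Then I would insert the lower bound for $\hind X*Y$ supplied by Theorem \ref{theorem:join-index-p}, which applies precisely because $\hind X$ and $\hind Y$ are odd, giving $\hind X*Y=\hind X+\hind Y+1$. Substituting and cancelling the $+1$,
\[
\hind X\times Y \ge \hind X + \hind Y - \max\{\hind X,\hind Y\} = \min\{\hind X,\hind Y\}.
\]
The reverse inequality holds with no parity assumption, since the equivariant projections $X\times Y\to X$ and $X\times Y\to Y$ force $\hind X\times Y\le\min\{\hind X,\hind Y\}$ by monotonicity.

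The argument is essentially a transcription of the $\mathbb{Z}/2$ proof, so I do not expect a serious obstacle; the only subtlety to watch is the bookkeeping on parity. The oddness hypothesis is consumed twice, once to license Theorem \ref{theorem:join-index-p} and once to secure the sharp subadditive constant, and it is the interplay of these two uses, rather than either alone, that makes the two bounds meet exactly at $\min\{\hind X,\hind Y\}$; with only the weaker $+2$ subadditivity one would lose a unit and obtain merely $\hind X\times Y\ge\min\{\hind X,\hind Y\}-1$, which explains why the clean equality is asserted only under the odd-index assumption.
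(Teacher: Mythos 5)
Your proposal is correct and is essentially identical to the paper's own proof: the same two-set cover of the join deforming to $X\sqcup Y$ and $X\times Y$, the same use of the sharp $+1$ bound in Lemma~\ref{lemma:subadditive} (licensed because $\hind X\sqcup Y=\max\{\hind X,\hind Y\}$ is odd), and the same insertion of the lower bound from Theorem~\ref{theorem:join-index-p}. Your explicit accounting of the two distinct uses of the oddness hypothesis is exactly the reasoning the paper compresses into its remark that the maximum ``in our case \ldots is an odd number.''
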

\begin{proof}
We cover the join $X*Y$ with two sets, one equivariantly deformable to $X\sqcup Y$ and the other equivariantly deformable to $X\times Y$. We have
\[
\hind X\sqcup Y = \max\{\hind X, \hind Y\}
\]
from the definition of the homological index, and in our case this is an odd number. By Lemma~\ref{lemma:subadditive} we have
\[
\hind X*Y \le \hind X\sqcup Y + \hind X\times Y + 1.
\]
Inserting the lower bound for $\hind X*Y$, we then obtain
\[
\hind X\times Y \ge \hind X + \hind Y - \max\{\hind X, \hind Y\} = \min\{\hind X, \hind Y\}.
\]
The reverse inequality evidently holds from the existence of equivariant maps $X\times Y\to X$ and $X\times Y\to Y$.
\end{proof}

\begin{corollary}
\label{corollary:product-index-p}
Let $X$ and $Y$ be polyhedra with free actions of $\mathbb Z/p$, then without any assumption on the indexes for their product there holds
\[
\min\{\hind X, \hind Y\} - 1 \le \hind X\times Y \le \min\{\hind X, \hind Y\}
\]
and the upper bound is attained when $\min\{\hind X, \hind Y\}$ is odd.
\end{corollary}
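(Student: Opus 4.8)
The plan is to derive both inequalities from the already-established formula for two spaces of \emph{odd} index (the preceding corollary), by replacing $X$ and $Y$ with invariant subcomplexes of odd homological index that are as large as possible. The upper bound is immediate: the two coordinate projections $X\times Y\to X$ and $X\times Y\to Y$ are $\mathbb Z/p$-equivariant for the diagonal action, so monotonicity of $\hind$ gives $\hind X\times Y\le\hind X$ and $\hind X\times Y\le\hind Y$, whence $\hind X\times Y\le\min\{\hind X,\hind Y\}$.

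For the lower bound I would first establish the reduction step: for a free $\mathbb Z/p$-space $Z$ and any $0\le j\le\hind Z$, the $j$-skeleton $Z^{(j)}$ of an invariant triangulation is an invariant subcomplex with $\hind Z^{(j)}=j$. Since $\dim Z^{(j)}=j$ forces $\hind Z^{(j)}\le j$, and the inclusion gives $\hind Z^{(j)}\le\hind Z$, only $\hind Z^{(j)}\ge j$ needs proof. Writing $H^*(B\mathbb Z/p;\mathbb F_p)=\Lambda(x)\otimes\mathbb F_p[y]$ with $\deg x=1$, $\deg y=2$ and $y=\beta x$ for the Bockstein $\beta$, the generator of $H^i$ is $g_i=y^{i/2}$ for even $i$ and $g_i=x\,y^{(i-1)/2}$ for odd $i$, and $\hind Z$ is the largest $i$ with $c^*g_i\ne0$ under the classifying map $c\colon Z/G\to B\mathbb Z/p$. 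The key observation is that $c^*g_i\ne0$ forces $c^*g_{i-1}\ne0$: for odd $i$ this holds because $g_i=x\,g_{i-1}$, so $c^*g_{i-1}$ is a factor of $c^*g_i$; for even $i$ it holds because $g_i=\beta g_{i-1}$, so $c^*g_{i-1}=0$ would give $c^*g_i=\beta(c^*g_{i-1})=0$ by naturality of $\beta$. Descending from $i=\hind Z$ we obtain $c^*g_i\ne0$ for every $i\le\hind Z$; since the restriction $H^i(Z/G)\to H^i(Z^{(i)}/G)$ is injective in degree $i$, this yields $\hind Z^{(i)}\ge i$.

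With this in hand the corollary follows quickly. Assume $\hind X\le\hind Y$ and set $k=\min\{\hind X,\hind Y\}=\hind X$. Let $a$ be the largest odd integer $\le\hind X$ and $b$ the largest odd integer $\le\hind Y$; then $a,b$ are odd, $a\ge k-1$, and $b\ge a$. By the reduction step $X^{(a)}$ and $Y^{(b)}$ have odd indices $a$ and $b$, so the preceding corollary gives $\hind(X^{(a)}\times Y^{(b)})=\min\{a,b\}=a$. As $X^{(a)}\times Y^{(b)}\hookrightarrow X\times Y$ is equivariant, monotonicity yields $\hind X\times Y\ge a\ge k-1$, which is the lower bound. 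If moreover $k$ is odd, I would take $a=k$; then $b\ge k$ and $\hind X\times Y\ge\min\{k,b\}=k$, so together with the upper bound $\hind X\times Y=k=\min\{\hind X,\hind Y\}$.

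The only real obstacle is the reduction step, and more precisely the fact that the odd-degree generators of $H^*(B\mathbb Z/p)$ persist as far as the even ones do. This is exactly where odd $p$ differs from $p=2$: one must use that the exterior class $x$ and the relation $y=\beta x$ tie $g_{i-1}$ to $g_i$, for otherwise the largest accessible odd index might drop well below $\hind Z-1$. Because the product formula is available only for spaces of odd index, one is forced to pass from $\min\{\hind X,\hind Y\}$ to the largest odd number not exceeding it, which costs exactly one unit when $\min\{\hind X,\hind Y\}$ is even and nothing when it is odd — accounting for both the $-1$ and the stated attainment of the upper bound.
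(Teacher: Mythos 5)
Your proof is correct and follows essentially the same route as the paper's: the upper bound from the equivariant projections $X\times Y\to X,\,Y$, and the lower bound by passing to invariant skeleta of odd homological index and invoking the preceding odd-index corollary together with monotonicity. The only difference is that you spell out, via the ring structure of $H^*(B\mathbb Z/p;\mathbb F_p)$ and naturality of the Bockstein, the fact that $\hind Z^{(j)}=j$ for every $j\le \hind Z$ --- a point the paper uses implicitly when it replaces $Y$ by a skeleton of index and dimension $n<\hind Y$.
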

\begin{proof}
The upper bound follows from the existence of equivariant maps $X\times Y\to X$ and $X\times Y\to Y$, thus we discuss the lower bound.

Assume $n = \hind X \le m = \hind Y$ without loss of generality. We may replace $Y$ with its skeleton of homological index and dimension precisely $n$, as in the proof of Theorem \ref{theorem:join-index-p} and pass to the case $\hind X = \hind Y = n$. If $n$ is odd then we are done by the previous corollary, because the upper bound is attained. Otherwise we, again, pass to spaces with index smaller by one and apply the case of odd index from the previous corollary.
\end{proof}

\section{Mapping index}
\label{section:mapping-index}

\subsection{Definition and the index of the product conjecture}

Let us now turn to the mapping index of a polyhedron $X$ with a free action of a finite group $G$.

\begin{definition}
\label{definition:mapping-index}
For a polyhedron $X$ with a free action of a finite group $G$, the \emph{mapping index} $\ind X$ is the minimal $k$ such that there exists a $G$-equivariant map
\[
X \to E_kG = \underbrace{G*G*\dots *G}_{k+1}
\]
to the $k$-dimensional approximation of $EG$. Similarly the mapping co-index $\coind X$ is the maximal $k$ such that there exists a $G$-equivariant map
\[
E_kG \to X.
\]
\end{definition}

In the case $G=\mathbb Z/2$ the space $B_kG$ is topologically a sphere $S^k$ with the antipodal action of $G$, given by $x\mapsto -x$; hence the definition is about equivariant maps to spheres and from spheres. The mapping index also has the monotonicity property, the existence of an equivariant map $X\to Y$ implies $\ind X \le \ind Y$. This is clear from the definition.

Since the dimension of $E_kG$ and $B_kG = E_kG/G$ is $k$, its homological index is at most $k$ and from the monotonicity of the homological index one readily obtains the inequality $\hind X \le \ind X$, hence the homological index is a lower bound for the mapping index.

In \cite{wrochna2019} it was shown that Hedetniemi's conjecture for graphs would imply the equality
\begin{equation}
\label{equation:ind-prod}
\ind X\times Y = \min\{\ind X, \ind Y\},
\end{equation}
for polyhedra $X$ and $Y$ with a free action of $G=\mathbb Z/2$. In view of the failure of Hedetniemi's conjecture the status of \eqref{equation:ind-prod} remains open; we will discuss its validity for free actions of arbitrary finite group $G$. 

The simplest observation is that there exists $G$-equivariant maps $X\times Y\to X$ and $X\times Y\to Y$. Using the monotonicity of the mapping index we readily obtain
\[
\ind X\times Y \le \min\{\ind X, \ind Y\},
\]
and therefore the whole question is if this inequality is always sharp or not.

We also mention the \emph{subadditivity property} of the mapping index of the join
\[
\ind X*Y \le \ind X + \ind Y + 1.
\]
It is easier compared to the subadditivity of the homological index in Lemma~\ref{lemma:subadditive}, since from equivariant maps $X\to E_nG$ and $Y\to E_mG$ we directly obtain, by taking their join, an equivariant map
\[
X*Y \to E_nG * E_m G = E_{n+m+1} G.
\]

\subsection{Some partial results on the mapping index of the product}

Here we present some of our findings about the conjectural equality of the mapping index of a product.

\begin{lemma}
\label{Lem:1}
If $X$ and $Y$ are $G$-spaces then there is an equivariant map $$(G* X)\times Y\rightarrow G*(X\times Y).$$
\end{lemma}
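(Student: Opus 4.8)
The plan is to write down the obvious coordinate-rearranging map and then check only that it is well defined, continuous, and equivariant; no obstruction-theoretic or connectivity input is needed here. Using the notation $(1-t)g \oplus t x$ for a point of $G * X$ (with $g \in G$, $x \in X$, $t \in [0,1]$), I would define
\[
f \colon (G * X) \times Y \to G * (X \times Y), \qquad f\bigl((1-t)g \oplus t x,\; y\bigr) = (1-t)\,g \oplus t\,(x,y).
\]
In words: the group coordinate $g$ and the join parameter $t$ are kept unchanged, while the stray factor $y \in Y$ is folded into the second join factor, paired with $x$.

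The first thing to verify is that $f$ respects the identifications defining the two joins. At $t = 0$ a point of $G * X$ equals $g$ irrespective of $x$, so the input is $(g, y)$; the formula returns the point $g$ of $G * (X \times Y)$ (the case $s = 0$), which is independent of both $x$ and $y$, exactly as the target requires. At $t = 1$ a point of $G * X$ equals $x$ irrespective of $g$, the input is $(x, y)$, and the formula returns the point $(x, y)$ in the base $X \times Y$ (the case $s = 1$), again independent of $g$. Hence $f$ descends to a well-defined map on the quotients, and continuity is immediate from the explicit formula together with the matching boundary behaviour just checked: as $t \to 0$ the image tends to $g$, and as $t \to 1$ it tends to $(x,y)$.

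Finally I would check equivariance directly. Since $G$ acts diagonally on every factor and by left translation on the group coordinate, for $g' \in G$ we have
\[
f\bigl(g' \cdot ((1-t)g \oplus t x,\, y)\bigr) = f\bigl((1-t)(g'g) \oplus t(g'x),\, g'y\bigr) = (1-t)(g'g) \oplus t(g'x, g'y),
\]
which is precisely $g' \cdot f\bigl((1-t)g \oplus t x,\, y\bigr)$. Thus $f$ is $G$-equivariant, completing the construction.

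The only genuinely delicate point — and the step I would be most careful about — is the degeneration at $t = 0$, where the $Y$-coordinate is forgotten: one must confirm that collapsing $y$ there is consistent with the target's own collapse of the $X \times Y$-coordinate at $s = 0$, so that $f$ stays well defined and continuous across that stratum. Everything else is a routine verification.
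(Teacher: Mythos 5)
Your proposal is correct and coincides with the paper's own proof: the paper defines exactly the same map $\bigl((1-t)g \oplus t x,\, y\bigr) \mapsto (1-t)g \oplus t(x,y)$ and simply asserts that equivariance is ``easy to check,'' whereas you spell out the verification. Your checks at $t=0$ and $t=1$ (that identified input points have equal images, so the map descends to the quotient) and the equivariance computation are all accurate, so nothing is missing.
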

\begin{proof}
It is easy to check that the following map
\[
\varphi :(G* X)\times Y \longrightarrow G*(X\times Y),\quad ((1-t)g\oplus t x, y) \longmapsto (1-t)g \oplus t (x,y) 
\]
defines an equivariant map from $(G*X)\times Y$ to $G*(X\times Y)$.
\end{proof}

\begin{lemma}
\label{lemma:with-a-map}
If there exists an equivariant map $X\to Y$ or $Y\to X$ then $\ind X\times Y = \min\{\ind X, \ind Y\}$.
\end{lemma}
\begin{proof}
Assume there exists equivariant $\phi : X\to Y$, then $\id_{X}\times\phi$ gives an equivariant map from $X$ to $X\times Y$ where $\id_{X}$ is the identity map on $X$. Hence $\ind X\leq \ind X\times Y$ from the monotonicity and in total 
\[
\min\{\ind X, \ind Y\}\le \ind X \le \ind X\times Y
\]
implies the result, since $\min\{\ind X, \ind Y\}\ge \ind X\times Y$ always holds.
\end{proof}

\begin{theorem}
\label{Thm:23}
Let $X$ be a $G$-space with $\ind X < \infty$. Then $$\ind X\times E_nG=\min\{\ind X,\, n\}.$$
\end{theorem}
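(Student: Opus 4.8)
The plan is to prove the two inequalities separately, the upper bound being routine and the lower bound requiring a reduction to the case $\ind X = n$. For the upper bound I would simply use the two equivariant projections $X\times E_nG\to X$ and $X\times E_nG\to E_nG$: monotonicity of the mapping index gives $\ind X\times E_nG\le\min\{\ind X,\ind E_nG\}$, and since the identity map witnesses $\ind E_nG\le n$, this yields $\ind X\times E_nG\le\min\{\ind X,n\}$.

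For the lower bound I would first dispose of the case $\ind X\le n$. Here $\min\{\ind X,n\}=\ind X$, and composing a minimal equivariant map $X\to E_{\ind X}G$ with the inclusion $E_{\ind X}G\hookrightarrow E_nG$ (available because $\ind X\le n$) produces an equivariant map $X\to E_nG$. Lemma~\ref{lemma:with-a-map}, applied to the factors $X$ and $E_nG$, then delivers the full equality $\ind X\times E_nG=\min\{\ind X,n\}$ at once. The remaining case is $\ind X>n$, where the target value is $n$ and I would produce a $G$-invariant subcomplex $X'\subseteq X$ with $\ind X'=n$; granting this, the case just treated applied to $X'$ gives $\ind X'\times E_nG=n$, and the equivariant inclusion $X'\times E_nG\hookrightarrow X\times E_nG$ together with monotonicity upgrades this to $\ind X\times E_nG\ge n$, matching the upper bound.

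The heart of the argument is thus the existence of such a subcomplex, which I would extract from the skeletal filtration $X^{(0)}\subseteq X^{(1)}\subseteq\dots\subseteq X$ of a $G$-invariant triangulation. The function $j\mapsto\ind X^{(j)}$ is non-decreasing by monotonicity, equals $0$ at $j=0$ (a free vertex set maps equivariantly onto $E_0G=G$), and equals $\ind X>n$ for $j=\dim X$. The key claim is that its steps have size at most one, $\ind X^{(j)}\le\ind X^{(j-1)}+1$; granting this, the function assumes every value between $0$ and $\ind X$, so some skeleton $X^{(j)}$ has index exactly $n$ and serves as $X'$. To prove the step bound I would set $k=\ind X^{(j-1)}$, fix an equivariant map $\psi:X^{(j-1)}\to E_kG$, and extend it to an equivariant map into $E_{k+1}G=E_kG\ast G$. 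Since $X^{(j)}$ is obtained by attaching free orbits of $j$-cells, it suffices to define the extension on one representative $\sigma$ of each orbit and propagate by the group action. The extension over $\sigma$ exists because the inclusion $E_kG\hookrightarrow E_{k+1}G$ is null-homotopic: it factors through the cone $E_kG\ast\{g_0\}$ on any single point $g_0\in G$. Hence $\psi|_{\partial\sigma}:S^{j-1}\to E_kG\hookrightarrow E_{k+1}G$ is null-homotopic and extends over the cell $\sigma\cong D^j$, and because the orbit is free the resulting extension is automatically equivariant.

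I expect this last null-homotopy step to be the main obstacle, or rather the one point that must be gotten right: it reduces the potentially many cell-by-cell extension problems to the single topological fact that joining with $G$ cones off $E_kG$, which is exactly what forces the index to grow by at most one per skeletal dimension. Everything else — the projection bound, the invocation of Lemma~\ref{lemma:with-a-map}, and the interpolation argument on $j\mapsto\ind X^{(j)}$ — is then routine, and the argument works verbatim for an arbitrary finite group $G$, since the null-homotopy of $E_kG\hookrightarrow E_{k+1}G$ holds for any nonempty $G$.
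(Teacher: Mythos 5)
Your proof is correct, but in the hard case $\ind X > n$ it takes a genuinely different route from the paper. The paper goes \emph{up}: it first computes $\ind (E_mG\times X)=m$ for $m=\ind X$ via the easy case, then uses the interchange map of Lemma~\ref{Lem:1}, iterated, to produce an equivariant map $E_mG\times X\to \bigl(\underbrace{G*\cdots*G}_{m-n}\bigr)*(E_nG\times X)$, and extracts $n\le\ind (E_nG\times X)$ from monotonicity together with the subadditivity of the mapping index under joins; no triangulation of $X$ is ever touched. You instead go \emph{down}: you manufacture a $G$-invariant subcomplex $X'\subseteq X$ with $\ind X'=n$ by an intermediate-value argument along the skeletal filtration, whose step bound $\ind X^{(j)}\le\ind X^{(j-1)}+1$ you prove by coning off into $E_{k+1}G=E_kG*G$ --- this is exactly the standard proof that the index of a free complex is at most its dimension (the fact the paper cites as \cite[Proposition 6.2.4]{matousek2003using}), sharpened into an interpolation statement. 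Your extension step is sound: the $G$-action on $j$-simplices is free (an element $g\neq e$ with $g\sigma=\sigma$ would fix the barycenter of $\sigma$, contradicting freeness on points), distinct simplices of one orbit overlap only in $X^{(j-1)}$ where equivariance of $\psi$ makes the orbit-wise definitions agree, and the factorization of $E_kG\hookrightarrow E_{k+1}G$ through the cone $E_kG*\{g_0\}$ kills the extension obstruction. What your route buys is self-containedness (no Lemma~\ref{Lem:1}, no join subadditivity) plus a reusable byproduct --- a subcomplex of any prescribed index at most $\ind X$, the mapping-index analogue of the skeleton trick the paper uses for $\hind$ in Theorem~\ref{theorem:join-index-2} and Corollary~\ref{corollary:product-index-p}; what it costs is explicit reliance on the standing assumption of a finite $G$-invariant triangulation, which the paper's purely formal join argument never needs. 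One point that both your argument and the paper's leave implicit is worth stating: invoking Lemma~\ref{lemma:with-a-map} to obtain the value $n$ (rather than $\min\{n,\ind E_nG\}$) uses the lower bound $\ind E_nG\ge n$, which for $G=\mathbb Z/2$ is the Borsuk--Ulam theorem and for a general finite group follows from Dold's theorem, since $E_nG$ is $(n-1)$-connected while $\dim E_kG=k<n$ for $k<n$.
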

\begin{proof}
We need to consider the following cases:

1) Assume $\ind X\leq n$. Then by the definition of the mapping index there exists an equivariant map $h:X\to E_nG$ and we apply Lemma~\ref{lemma:with-a-map}. 

2) Assume $m=\ind X > n$. By using the first case, we have 
$\ind\left(E_mG\times X\right)=m$. 
Note that $E_nG*\left(\underbrace{G*\cdots* G}_{m-n}\right)= E_{m}G$. Thus,
by Lemma \ref{Lem:1}, there exists an equivariant map
\[
E_mG\times X\longrightarrow \left(\underbrace{G*\cdots* G}_{m-n}\right)*(E_nG\times X).
\]
Therefore,
\[
m=\ind E_mG\times X\leq \ind \left(\underbrace{G*\cdots* G}_{m-n}\right)*(E_nG\times X)\leq (m-n)+  \ind E_nG\times X,
\]
where the last inequality follows from the subadditivity of the mapping index of joins. This implies that $n\leq  \ind E_nG\times X$, which completes the proof.
\end{proof}

In the following, we show the correctness of the equality \eqref{equation:ind-prod}  for several cases. In particular, we construct several examples in order to answer a question raised by Marcin Wrochna~\cite{wrochna2019} as follows: ``Can one compute the index of some non-trivial products involving non-tidy spaces?''. Let us first clarify what a tidy space mean:

\begin{definition}
A $G$-space $X$ is called a \emph{tidy space} if $\ind X = \coind X$. 
\end{definition}

Note that the case that both of $X$ and $Y$ are tidy spaces is a special case of Lemma~\ref{lemma:with-a-map}. Indeed, if $m=\coind X=\coind X\leq \coind Y= \ind Y=n$, then there is an equivariant map from $X$ to $E_m G$ as $m=\ind X$, and an equivariant map from $E_m G$ to $Y$ as $m\leq n=\coind Y$. The composition of these two maps defines an equivariant map from $X$ to $Y$. In summary, for a non-trivial example we need at least one of the factor in the product be a non-tidy space. The next theorem establishes some particular cases of the calculation of the index of the product of $G$-spaces:

\begin{theorem}
\label{Thm:25}
Let $X, Y$ be polyhedra with free actions of a finite group $G$. If
\begin{enumerate}[label=(\alph*)]
\item at least one of $X$ and $Y$ is a tidy space, or
\item $G=\mathbb Z/2$, and $\ind X=\hind X\leq \hind Y\leq\ind Y$, or
\item $G=\mathbb Z/2$, and $X$, $Y$ are closed topological manifolds with $\ind X= \dim  X$ and $\ind Y= \dim  Y$, then
\end{enumerate}
$$\ind  X\times Y= \min\{\ind X,  \ind  Y\}.$$
\end{theorem}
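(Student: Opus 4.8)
The plan is to establish each of the three cases by proving the reverse inequality $\ind X\times Y\ge\min\{\ind X,\ind Y\}$; the opposite inequality $\ind X\times Y\le\min\{\ind X,\ind Y\}$ is automatic from the projections $X\times Y\to X$, $X\times Y\to Y$ and monotonicity. For case~(a), by the symmetry of the product I may assume $X$ is tidy, so $\ind X=\coind X=:m$. The definition of the co-index provides an equivariant map $E_mG\to X$; taking its product with $\id_Y$ yields an equivariant map $E_mG\times Y\to X\times Y$, so monotonicity gives $\ind(E_mG\times Y)\le\ind X\times Y$. By Theorem~\ref{Thm:23} the left-hand side equals $\min\{m,\ind Y\}=\min\{\ind X,\ind Y\}$, which is exactly the bound we want.

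For case~(b) I would route the estimate through the homological index. The hypothesis $\ind X=\hind X\le\hind Y\le\ind Y$ gives $\min\{\ind X,\ind Y\}=\ind X$. Using $\hind\le\ind$ for the product together with Corollary~\ref{corollary:prod-index-2}, which computes $\hind(X\times Y)=\min\{\hind X,\hind Y\}=\hind X$, I obtain $\ind X\times Y\ge\hind(X\times Y)=\hind X=\ind X$, as required.

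For case~(c) the strategy is to reduce to the mechanism of case~(b) by proving the following manifold lemma: \emph{a closed $n$-manifold $X$ carrying a free $\mathbb Z/2$-action with $\ind X=n$ satisfies $\hind X=n$.} Granting this for both factors gives $\ind X=\hind X=\dim X$ and $\ind Y=\hind Y=\dim Y$, so Corollary~\ref{corollary:prod-index-2} yields $\hind(X\times Y)=\min\{\dim X,\dim Y\}=\min\{\ind X,\ind Y\}$ and $\hind\le\ind$ finishes as before. (If $X$ is disconnected I pass to a component realizing the index, since both $\ind$ and $\hind$ are the maximum over the $G$-components.)

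To prove the lemma, set $M=X/G$ and let $L$ be the real line bundle of the double cover $X\to M$, so that $\bar w:=w_1(L)\in H^1(M;\mathbb F_2)$ classifies the cover and $H^*(BG;\mathbb F_2)\to H^*(M;\mathbb F_2)$ sends the generator to $\bar w$. An equivariant map $X\to S^{n-1}=E_{n-1}G$ is the same as a nowhere-zero section of the Whitney sum $L^{\oplus n}$, so $\ind X=n$ is equivalent to nonvanishing of the twisted Euler class $e(L^{\oplus n})=e(L)^n$, whereas $\hind X=n$ is nonvanishing of its mod-$2$ reduction $w_n(L^{\oplus n})=\bar w^{\,n}$. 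The main obstacle is precisely that a nonzero integral class can reduce to zero modulo $2$, and overcoming it is the heart of the argument. I would resolve it by two observations: first, the universal twisted Euler class in $H^1(B\mathbb Z/2;\mathbb Z_-)=\mathbb Z/2$ is $2$-torsion, hence so is $e(L)$ and therefore $e(L)^n$; second, twisted Poincar\'e duality identifies $H^n(M;\mathbb Z_\tau)$ with $\mathbb Z$ when the twist $\tau=w_1(L^{\oplus n})$ equals $w_1(M)$ and with $\mathbb Z/2$ otherwise. In the former case the group is torsion-free, so the $2$-torsion class $e(L)^n$ vanishes, contradicting $\ind X=n$; hence we are always in the latter case, where reduction modulo $2$ is injective and $e(L)^n\ne0$ forces $\bar w^{\,n}\ne0$. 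Cases~(a) and~(b) are short once Theorem~\ref{Thm:23} and Corollary~\ref{corollary:prod-index-2} are in hand, so I expect this Euler-class lemma to be the only real difficulty.
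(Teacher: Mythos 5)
Your proposal is correct, and for parts (a) and (b) it coincides with the paper's proof: the paper likewise transports the computation to $E_mG\times Y$ via the maps coming from $\coind X=\ind X=m$ and invokes Theorem~\ref{Thm:23} (it records both equivariant maps to get $\ind X\times Y=\ind E_mG\times Y$, whereas you streamline this to the one direction needed), and part (b) is verbatim the same chain $\min\{\ind X,\ind Y\}=\min\{\hind X,\hind Y\}=\hind X\times Y\le\ind X\times Y$ via Corollary~\ref{corollary:prod-index-2}. The genuine divergence is in part (c): the paper does not prove the manifold lemma at all, but quotes it from Musin \cite[Corollary 3.1]{musin2015borsuk} ($\ind M=\dim M$ implies $\hind M=\dim M$ for closed manifolds, and in fact for pseudomanifolds modulo $2$), and then finishes exactly as you do. Your replacement is a self-contained obstruction-theoretic proof: identifying $\ind X\le n-1$ with a nowhere-zero section of $L^{\oplus n}$ over $M=X/G$, so that $\ind X=n$ is equivalent to $e(L)^n\neq 0$ in $H^n(M;\mathbb Z_\tau)$ (legitimate, since for a rank-$n$ bundle over the $n$-dimensional $M$ the twisted Euler class is the unique obstruction), and then using that $e(L)$ is $2$-torsion together with twisted Poincar\'e duality ($H^n(M;\mathbb Z_\tau)\cong\mathbb Z$ iff $\tau=w_1(M)$, else $\mathbb Z/2$) to rule out the torsion-free case and conclude that mod-$2$ reduction $e(L)^n\mapsto \bar w^{\,n}=w_n(L^{\oplus n})$ is injective, whence $\hind X=n$; the coefficient sequence $0\to\mathbb Z_\tau\xrightarrow{\,2\,}\mathbb Z_\tau\to\mathbb F_2\to 0$ confirms the injectivity since $H^{n+1}(M;\mathbb Z_\tau)=0$, and your reduction to a connected invariant piece realizing the index is sound because both indexes are maxima over such pieces. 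What each approach buys: the citation route is shorter and covers the strictly larger class of mod-$2$ pseudomanifolds (as the paper remarks, even more general spaces via Proposition~3.1 of \cite{musin2015borsuk}), while your argument makes the paper self-contained for the stated hypothesis of closed topological manifolds and exhibits the precise mechanism --- torsion of the twisted Euler class plus duality --- behind the coincidence of $\ind$ and $\hind$ in the top dimension.
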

\begin{proof}
Suppose we have the assumption in part $(a)$. Without loss of generality we can assume that $X$ is a tidy space. Put $m= \coind X= \ind X$. There is an equivariant map $f: E_mG\to X$ as $\coind X= m$. Also, the identity map $\id_{Y}$ is an equivariant map form $Y$ to itself. Now, the map $f\times \id_Y$ defines an equivariant map from $E_{m}G\times Y$ to $X\times Y$. In the similar way there is an equivariant map from $X\times Y$ to $E_{m}G\times Y$ as $\ind X= m$. Therefore, $\ind X\times Y= \ind E_{m}G\times Y$. Now using Theorem~\ref{Thm:23} we get the desired conclusion. 

For the part $(b)$, the following inequalities show the correctness of our assertion.
\begin{multline*}
\min\{\ind  X, \ind  Y\} = \min\{\hind X, \hind Y\} = \\
= \hind X\times Y \leq \ind X\times Y \leq \min\{\ind  X, \ind  Y\},
\end{multline*}
where the second equality follows from Corollary~\ref{corollary:prod-index-2}. 

For the proof of the last part it is sufficient to note that, by \cite[Corollary 3.1]{musin2015borsuk}, the assumption $\ind M=\dim M$ implies $\hind  M=\ind M=\dim M$ when $M$ is a closed topological manifold (or a pseudomanifold modulo $2$). Therefore, 
\[
\hind X\times Y=\min \{\hind X,\hind Y\}=\min \{\dim  X,\dim Y\}.
\]
Since the homological index is a lower bound for the mapping index and 
\[
\ind X\times Y\leq \min \{\dim X,\dim Y\},
\]
we are done. Please note that results of this kind hold true for more general spaces than pseudomanifolds modulo $2$, see Proposition~3.1 and Corollaries~3.1 and 3.2 in \cite{musin2015borsuk}.
\end{proof}

\subsection{Particular examples of the mapping index of the product}

Let us list some concrete non-trivial examples satisfying \eqref{equation:ind-prod}. First note that any pair of $G$-space $X$, $Y$ with the following properties
\begin{equation}
\label{equation:index-interlaced}
\coind Y < \coind X\leq \ind X < \ind Y
\end{equation}
is a non-trivial candidate for Wrochna's question. Indeed, there is no equivariant map from $X$ to $Y$ as $\coind X > \coind Y$. Similarly, there is no equivariant map from $Y$ to $X$ as $\ind Y > \ind X$.

\begin{example}
Consider $X=S^{n}$ as a free $\mathbb Z/2$-space whose action is given by the antipodal map, i.e., $x\mapsto -x$ for all $x\in S^{n}$. The famous Borsuk--Ulam theorem says there is no $\mathbb Z/2$-map from a higher dimensional sphere to a lower dimensional sphere, this is the reason for the definition of the mapping index. Therefore, $\coind X=\ind X= n $. Also, consider the projective space $Y=\mathbb{R}P^{2n+1}$ as the quotient of the unit sphere $S^{2n+1}$ in $\mathbb{C}^{n+1}$ over the relation $x\sim -x$ for all $x\in S^{2n+1}$. The map $(v_1, \ldots, v_{n+1})\mapsto (iv_1, \ldots, iv_{n+1})$ where $v_1, \ldots, v_{n+1}\in \mathbb{C}$, induces a free $\mathbb  Z/2$-action on $\mathbb{R}P^{2n+1}$. It is known that $\coind Y\leq 1$~\cite{zivaljevic2002level}, and $\ind Y\geq n+1$~\cite{stolz1989level}. Hence, for each $n\geq 2$, the pair $X$, and $Y$ satisfies \eqref{equation:index-interlaced} and the assumption $(a)$ of Theorem~\ref{Thm:25}. Therefore, this pair is a non-trivial example of free $\mathbb Z/2$-spaces satisfying \eqref{equation:ind-prod}.
\end{example}

In the previous example, one of the factors is a tidy space. Next example shows the computation of index of the product of two non-tidy spaces.

\begin{example}
First let us start by construction a non-tidy space $X$ satisfying the following condition:
\[
2\leq \coind X < \hind X= \ind X.
\]
In~\cite[Theorem 6.6]{conner1960fixed} Conner and Floyd assert that once we have a vector bundle $\beta : E \to Z$ with dimension of the fibers $n$ and the degree of the largest nonzero dual Stiefel--Whitney class $k$ 
(meaning $\bar w_k(\beta)\neq 0$) then its bundle of $(n-1)$-spheres  $S(\beta)$ has homological-index precisely $n-1+k$. Note that Conner and Floyd use a different notation interchanging index and co-index, while
Clapp and Marzantowicz \cite{clapp2000essential} called the index ``capacity". In our terminology $\mathrm{co-ind}_2$ of Conner and Floyd means the
homological index.

It is easy to find an example of such $\mathbb Z/2$-space when $\hind$ and $\ind$ (in our 
notation) coincide. Let $\gamma_{k,n} \to G_{k,n}$ be the tautological bundle over the Grassmannian. Then $\bar w_{n-k}(\gamma_{k,n})  = w_{n-k}(\gamma_{n-k,n})\neq 0$ and hence by the above theorem $\hind (S(\gamma_{k,n})) = n-1$. But a member of $S(\gamma_{k,n})$ is just a unit vector in $\mathbb R^n$ together with a $k$-dimensional linear subspace containing it. Hence, forgetting the linear subspace, we easily map $S(\gamma_{k,n})$ to $S^{n-1}$ equivariantly with respect to taking the opposite. This proves that $\hind$ and $\ind$ of this space equal $n-1$. Now, let $n\geq 3$ be odd and $X= S(\gamma_{3,n})$, the existence of an equivariant map $f : S^{n-1}\to S(\gamma_{3,n})$ would mean that the composition with the equivariant map $g : S(\gamma_{3,n}) \to S^{n-1}$,
\[
S^{n-1}\longrightarrow S(\gamma_{3,n}) \longrightarrow S^{n-1},
\]
has odd degree. We will only need that this degree is nonzero and the Euler class of the vector bundle $\alpha = (g\circ f)^*(TS^{n-1})$ is therefore nonzero. The existence of $f$ means that $\alpha$ has a $2$-dimensional subbundle $\delta$, formed by the orthogonal complement of the unit vector in its attached (in $S(\gamma_{3,n})$) $3$-dimensional linear subspace of $\mathbb R^n$. But the Stiefel--Whitney and Chern classes of $\delta$ are zero just 
because of the dimension considerations (the cohomology of $S^{n-1}$ vanishes in degrees 1 and 2); and for a two-dimensional vector bundle $\delta$ this is sufficient to show that the bundle is trivial. Hence $\delta$ has a nonzero section, and $\alpha$ also has a nonzero section. But this contradicts the fact that the Euler class of $\alpha$ is nonzero. The contradiction confirms that 
\[
2\leq\coind S(\gamma_{3,n}) < n-1.
\]

On the other hand, in~\cite{mmm} Matsushita constructed a free $\mathbb Z/2$-manifold $X_n$ for each integer $n\geq 2$ such that $\hind X_{n}= \ind X_n= n$, but $\coind X_{n}\leq 1$. Put $Y= X_{n}$. 
Now, by \eqref{equation:index-interlaced} and the assumption $(b)$ in Theorem~\ref{Thm:25}, the pair $X$ and $Y$ is another non-trivial example of free $\mathbb Z/2$-spaces satisfying \eqref{equation:ind-prod}, both of them being non-tidy.
\end{example}

\section{Box complex and the homological chromatic number}
\label{section:hom-chromatic}
In order to bound the chromatic number of a given graph $H$ from below, the box complex $\mathcal B(H)$ has been introduced in \cite{matousek2003using}. Actually, $\mathcal B(H)$ is an abstract simplicial complex whose simplices are all
$A_1\times\{1\}\cup A_2\times\{2\}$ with $A_1, A_2$ are disjoint subsets of $V(H)$, and every vertex of $A_1$ is connected to every vertex of $A_2$ in $H$. Moreover if one of the part ($A_1$ or $A_2$) is empty, then the other part must have a common neighbor in $H$. There is a natural $\mathbb Z/2$-action on $\mathcal B(H)$ given by $(v,1)\mapsto (v,2)$ and $(v,2)\mapsto (v,1)$ for all $v\in V(H)$. This action turns the polyhedron of the geometric realization of $\mathcal B(H)$, $||\mathcal B(H)||$, into a free $\mathbb Z/2$-space. The following theorem reveals the main motivation behind the definition of this object.

\begin{theorem}[Theorem~5.9.3 in \cite{matousek2003using}]
\label{theorem:box}
For every finite graph $H$, $\chi(H)\geq \ind ||B(H)||+2$.
\end{theorem}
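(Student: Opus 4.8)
The plan is to exploit the functoriality of the box complex together with the monotonicity of the mapping index, reducing the whole statement to a single computation: the index of the box complex of a complete graph. Throughout write $n=\chi(H)$.

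First I would record the functoriality. A graph homomorphism $f\colon G\to G'$ (a vertex map carrying edges to edges) induces a simplicial $\mathbb Z/2$-map $\mathcal B(f)\colon \|\mathcal B(G)\|\to\|\mathcal B(G')\|$ defined on vertices by $(v,i)\mapsto (f(v),i)$. The content is to check that $\mathcal B(f)$ carries a simplex $(A_1,A_2)$ of $\mathcal B(G)$ to the simplex $(f(A_1),f(A_2))$ of $\mathcal B(G')$. When both parts are nonempty, every vertex of $A_1$ is adjacent to every vertex of $A_2$, so their $f$-images are adjacent and hence distinct; this yields simultaneously the disjointness $f(A_1)\cap f(A_2)=\emptyset$ and the full bipartite-adjacency condition. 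When one part is empty, a common neighbor of the other part maps to a common neighbor of its image. Equivariance is immediate, since $f$ commutes with swapping the second coordinate. A proper coloring of $H$ with $n$ colors is exactly a homomorphism $H\to K_n$, so by monotonicity (an equivariant map $X\to Y$ forces $\ind X\le\ind Y$, as recorded above) it suffices to prove $\ind\|\mathcal B(K_n)\|\le n-2$.

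The heart of the matter, and the main obstacle, is this last inequality, which I would establish by an explicit equivariant map $\|\mathcal B(K_n)\|\to S^{n-2}$. Identify $S^{n-2}$ with the unit sphere of the hyperplane $\{x\in\mathbb R^n:\sum_k x_k=0\}$ with the antipodal involution. Send $(i,1)\mapsto e_i-\tfrac1n\mathbf 1$ and $(i,2)\mapsto -(e_i-\tfrac1n\mathbf 1)$, where $\mathbf 1$ is the all-ones vector, and extend affinely over each simplex. Both target vectors lie in the hyperplane, and swapping the second coordinate becomes $x\mapsto -x$, so the map is equivariant and lands in the hyperplane.

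The key point to verify is that the image never meets the origin. On a simplex $(A_1,A_2)$ a point maps to $\sum_{i\in A_1}t_ie_i-\sum_{j\in A_2}s_je_j-\tfrac{T-S}{n}\mathbf 1$, where $T=\sum t_i$, $S=\sum s_j$, $T+S=1$, and all coefficients are nonnegative. Since $A_1$ and $A_2$ are disjoint, a short coordinatewise sign analysis settles it: if some index $k$ lies outside $A_1\cup A_2$, then vanishing of the $k$-th coordinate forces $T=S=\tfrac12$, after which the $A_1$-coordinates force all $t_i=0$, contradicting $T=\tfrac12$; if instead $A_1\cup A_2=[n]$, comparing the constant values on $A_1$ and on $A_2$ against the nonnegativity of the coefficients again yields a contradiction. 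Hence the image avoids the origin, and radial projection to the unit sphere produces the desired equivariant map, giving $\ind\|\mathcal B(K_n)\|\le n-2$ and therefore $\chi(H)=n\ge\ind\|\mathcal B(H)\|+2$. I expect the sign bookkeeping in this non-vanishing step to be the only genuinely delicate part; everything else is formal. (In fact $\|\mathcal B(K_n)\|$ is $\mathbb Z/2$-homotopy equivalent to $S^{n-2}$, so the estimate is sharp, but only the upper bound is needed here.)
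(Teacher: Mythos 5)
The paper itself offers no proof of this statement --- it is quoted directly from Matou\v{s}ek's book --- so the relevant comparison is with the standard proof given there, and your argument is essentially that proof: reduce via functoriality of $\mathcal B$ and monotonicity of $\ind$ to the complete graph, then exhibit an explicit equivariant map $\|\mathcal B(K_n)\|\to S^{n-2}$. Indeed your vertex images $\pm\bigl(e_i-\tfrac1n\mathbf{1}\bigr)$ are exactly the orthogonal projections of $\pm e_i$ to the hyperplane $\bigl\{x:\sum_k x_k=0\bigr\}$ used in the textbook argument. The functoriality check (adjacency of images forces both disjointness of $f(A_1),f(A_2)$ and the bipartite condition; common neighbors push forward) and your Case 1 of the non-vanishing analysis are correct as written.

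One point does deserve an explicit sentence. In your Case 2, where $A_1\cup A_2=[n]$, the ``comparison of the constant values on $A_1$ and on $A_2$'' tacitly assumes both parts are nonempty. This is not cosmetic: if $(\emptyset,[n])$ or $([n],\emptyset)$ were simplices of $\mathcal B(K_n)$, your non-vanishing claim would actually be false, since the barycenter of such a simplex maps to $\sum_{i=1}^n \tfrac1n e_i-\tfrac1n\mathbf{1}=0$ (up to sign). What rules these out is precisely the common-neighbor clause in the definition of the box complex: in $K_n$ the full vertex set $[n]$ has no common neighbor (a vertex is not adjacent to itself), so any simplex with one empty part has the other part a \emph{proper} subset of $[n]$, and then your Case 1 disposes of it. With that one remark inserted, the proof is complete --- and it is worth noting that this boundary case is exactly the reason the bound is stated for $\mathcal B$ rather than for the larger complex in which the common-neighbor condition is dropped.
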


This theorem inspires the following definition:

\begin{definition}
For a graph $H$, call
\[
\hchi (H) = \hind \mathcal ||B(H)|| + 2
\]
the \emph{homological chromatic number} of $H$. 
\end{definition}

By Theorem \ref{theorem:box}, in view of the fact that the homological index estimates the mapping index from below, the homological chromatic number gives a lower bound for the ordinary chromatic number of a graph, that is 
\[
\chi (H) \ge \hchi (H)
\]
always. Now we are in position to mention our promised ``true version'' of Hedetniemi's conjecture for homological chromatic number. First, recall that the categorical product $H_1\times H_2$ of two graphs $H_1$ and $H_2$, is the graph with the vertex set $V(H_1)\times V(H_2)$ and two vertices $(u_1,u_2)$ and $(v_1,v_2)$ are adjacent in $H_1\times H_2$ if and only if $u_i$ and $v_i$ are adjacent in $H_i$ for $i=1, 2$.

\begin{corollary}[Hedetniemi's conjecture for homological chromatic number]
\label{coro: h-ind}
For every graphs $H_1$ and $H_2$ we have
$$\hchi (H_1 \times H_2)= \min\{\hchi (H_1), \hchi (H_2)\}.$$
\end{corollary}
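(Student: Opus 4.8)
The plan is to peel off the definition and reduce everything to the topological product formula of Corollary~\ref{corollary:prod-index-2}. Since $\hchi(H)=\hind\|\mathcal B(H)\|+2$, subtracting $2$ shows the assertion is equivalent to
\[
\hind\|\mathcal B(H_1\times H_2)\|=\min\bigl\{\hind\|\mathcal B(H_1)\|,\ \hind\|\mathcal B(H_2)\|\bigr\},
\]
and by Corollary~\ref{corollary:prod-index-2} the right-hand side is exactly $\hind\bigl(\|\mathcal B(H_1)\|\times\|\mathcal B(H_2)\|\bigr)$, taken with the diagonal $\mathbb Z/2$-action. So it suffices to exhibit $\mathbb Z/2$-equivariant maps in both directions between $\|\mathcal B(H_1\times H_2)\|$ and $\|\mathcal B(H_1)\|\times\|\mathcal B(H_2)\|$ and appeal to the monotonicity of $\hind$.

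The inequality $\le$ is immediate. The two coordinate projections $H_1\times H_2\to H_i$ are graph homomorphisms, a graph homomorphism induces a simplicial $\mathbb Z/2$-map of box complexes, and the two induced maps combine diagonally into an equivariant map $\mathcal B(H_1\times H_2)\to\mathcal B(H_1)\times\mathcal B(H_2)$. Monotonicity together with Corollary~\ref{corollary:prod-index-2} then gives $\hind\|\mathcal B(H_1\times H_2)\|\le\min\{\dots\}$, that is $\hchi(H_1\times H_2)\le\min\{\hchi(H_1),\hchi(H_2)\}$.

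The reverse inequality is the substantive part — the analogue of the (now disproved) direction of Hedetniemi's conjecture — and I expect it to be the main obstacle. The natural candidate is the map tensoring matching sides: a pair of points with side measures $(\mu_1,\mu_2)$ on $V(H_1)$ and $(\nu_1,\nu_2)$ on $V(H_2)$ is sent to the point with side measures proportional to $(\mu_1\otimes\nu_1,\ \mu_2\otimes\nu_2)$. One checks that the supports $\supp\mu_i\times\supp\nu_i$ satisfy the complete-bipartiteness condition in $H_1\times H_2$ and that the assignment is equivariant for the diagonal involution, which swaps the two sides of each factor simultaneously and hence interchanges the two matched tensors. The difficulty is that this candidate is not everywhere defined: the normalizing mass $|\mu_1|\,|\nu_1|+|\mu_2|\,|\nu_2|$ vanishes exactly when the two side-mass parameters are opposed (one point supported on its first side, the other on its second), and there the formula degenerates. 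This degeneracy is intrinsic to the coupled-mass model $\mathcal B(H)$, in which the two sides must share a single unit of mass.

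To remove the degeneracy I would carry out the construction in the uncoupled model $\mathrm{Hom}(K_2,H)$, whose cells are products $\Delta(A_1)\times\Delta(A_2)$ ranging over complete-bipartite pairs $A_1,A_2\subseteq V(H)$, so that the two sides carry independent unit masses, and which is $\mathbb Z/2$-homotopy equivalent to $\|\mathcal B(H)\|$ and therefore has the same homological index. In this model the same matching-tensor formula $\bigl((\alpha_1,\beta_1),(\alpha_2,\beta_2)\bigr)\mapsto(\alpha_1\otimes\alpha_2,\ \beta_1\otimes\beta_2)$ requires no normalization, is defined and continuous everywhere, lands in $\mathrm{Hom}(K_2,H_1\times H_2)$, and is $\mathbb Z/2$-equivariant. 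Monotonicity of $\hind$ together with Corollary~\ref{corollary:prod-index-2} applied to $\mathrm{Hom}(K_2,H_1)$ and $\mathrm{Hom}(K_2,H_2)$ then yields $\hind\mathrm{Hom}(K_2,H_1\times H_2)\ge\min\{\dots\}$, and transporting along the equivalences $\mathrm{Hom}(K_2,H)\simeq_{\mathbb Z/2}\|\mathcal B(H)\|$ gives the reverse inequality. The points that will need genuine care are precisely the equivariant homotopy equivalence between the box complex and $\mathrm{Hom}(K_2,\,\cdot\,)$ and the compatibility of these identifications with the coordinate projections; everything else is a routine unwinding.
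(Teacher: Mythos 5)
Your proof is correct, but it takes a more hands-on route than the paper. The paper's proof is essentially one line: it quotes the known fact (Remark~3 in \cite{simonyi2010topological}) that $\|\mathcal B(H_1\times H_2)\|$ is $\mathbb Z/2$-homotopy equivalent to $\|\mathcal B(H_1)\|\times\|\mathcal B(H_2)\|$, so the two homological indices coincide, and then applies Corollary~\ref{corollary:prod-index-2}. You instead manufacture the comparison yourself: equivariant maps $\|\mathcal B(H_1\times H_2)\|\to\|\mathcal B(H_1)\|\times\|\mathcal B(H_2)\|$ from the coordinate projections (functoriality of $\mathcal B$), and a map in the other direction via the matching-tensor formula carried out in the $\mathrm{Hom}(K_2,\cdot)$ model to dodge the genuine degeneracy you correctly spot in the coupled-mass model of $\mathcal B(H)$ --- and then you only need monotonicity of $\hind$ plus Corollary~\ref{corollary:prod-index-2}, not an actual homotopy equivalence between the product and $\|\mathcal B(H_1\times H_2)\|$. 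The one external input you still need, the $\mathbb Z/2$-homotopy equivalence $\mathrm{Hom}(K_2,H)\simeq_{\mathbb Z/2}\|\mathcal B(H)\|$, is a known theorem (see \cite{csorba2007homotopy}, already in this paper's bibliography), so your argument closes; in fact the standard proof of the Simonyi--Zs\'aban remark the paper cites goes through $\mathrm{Hom}(K_2,\cdot)$ and the product decomposition of Hom complexes, so you have effectively reproved the half of that remark you need. Two small remarks: your final worry about compatibility of the identifications with the coordinate projections is unnecessary, since you never need any diagram to commute --- each direction contributes an index inequality on its own via monotonicity; and the two maps you build need not be mutually inverse up to homotopy, which is fine for the same reason. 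What your route buys is self-containedness at the level of maps and the explicit isolation of where the naive tensor construction fails; what the paper's route buys is brevity, at the price of citing a stronger (homotopy-equivalence) statement than the index computation actually requires.
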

\begin{proof}
It is known that for every pair of graphs $H_1$ and $H_2$, the box complex $||B(H_1\times H_2)||$ is $\mathbb Z/2$-equivariantly homotopy equivalent to $||\mathcal B(H_1)||\times ||\mathcal B(H_2)||$, see~\cite[Remark 3]{simonyi2010topological}. Therefore
\[
\hind ||B(H_1\times H_2)||=\hind ||\mathcal B(H_1)||\times ||\mathcal B(H_2)||.
\] 
Now, Corollary~\ref{corollary:prod-index-2} gives the desired result.
\end{proof}

The next corollary  automatically implies the validity of Hedetniemi's conjecture for graphs, whose homological chromatic number equals the actual chromatic number.

\begin{corollary}
For every graphs $H_1$ and $H_2$, we have
\[
\chi(H_1\times H_2)\geq\min\{\hchi (H_1), \hchi (H_2)\}.
\]
\end{corollary}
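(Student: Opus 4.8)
The plan is to chain together two facts that the excerpt has already established, so that almost no new work is required. The target inequality
\[
\chi(H_1\times H_2)\geq\min\{\hchi (H_1), \hchi (H_2)\}
\]
should follow by combining the universal inequality $\chi(H)\ge\hchi(H)$ with the Hedetniemi-type equality for the homological chromatic number proved in Corollary~\ref{coro: h-ind}.

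First I would apply the general bound $\chi(H)\ge\hchi(H)$ to the single graph $H_1\times H_2$. Recall that this bound is itself a composition of two ingredients already in place: Theorem~\ref{theorem:box} gives $\chi(H)\ge\ind\|B(H)\|+2$, and the inequality $\hind X\le\ind X$ (valid because $E_kG$ and $B_kG$ have dimension $k$, so their homological index is at most $k$, and then one invokes the monotonicity of $\hind$) yields $\ind\|B(H)\|\ge\hind\|B(H)\|$. Hence
\[
\chi(H_1\times H_2)\ge \hchi(H_1\times H_2).
\]

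Second, I would simply substitute the value of the right-hand side supplied by Corollary~\ref{coro: h-ind}, namely $\hchi(H_1\times H_2)=\min\{\hchi(H_1),\hchi(H_2)\}$. Combining the two displays gives the claimed inequality immediately. In effect the entire content of the statement is packaged into the earlier corollary, whose proof rested on the $\mathbb Z/2$-homotopy equivalence $\|B(H_1\times H_2)\|\simeq\|\mathcal B(H_1)\|\times\|\mathcal B(H_2)\|$ together with the product formula $\hind X\times Y=\min\{\hind X,\hind Y\}$ of Corollary~\ref{corollary:prod-index-2}.

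There is essentially no obstacle to overcome here: the corollary is a direct consequence of the two preceding results, and the only thing worth checking is bookkeeping, i.e.\ that the inequality direction ($\chi\ge\hchi$) and the exact equality ($\hchi$ of the product equals the minimum) line up to produce a single lower bound. The genuine mathematical difficulty lies entirely upstream, in establishing the product formula for the homological index and the homotopy equivalence of box complexes; once those are granted, this final statement is purely formal.
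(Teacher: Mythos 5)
Your proposal is correct and follows exactly the paper's own argument: apply the general bound $\chi(H)\ge\hchi(H)$ to the product graph $H_1\times H_2$ and then substitute the equality $\hchi(H_1\times H_2)=\min\{\hchi(H_1),\hchi(H_2)\}$ from Corollary~\ref{coro: h-ind}. Your additional unpacking of why $\chi\ge\hchi$ holds (Theorem~\ref{theorem:box} combined with $\hind\le\ind$) matches the paper's earlier discussion and introduces nothing new.
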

\begin{proof}
\[
\chi(H_1\times H_2) \geq  \hchi (H_1\times H_2) = \min\{\hchi (H_1), \hchi (H_2)\}
\]
by Corollary~\ref{coro: h-ind}.
\end{proof}

It is worth mentioning that the above corollary was known, if we replace $\hind$ by $\coind$ in the definition of homological chromatic number, see~\cite{simonyi2010topological}. But our bound can significantly better than the bound given by co-index, as:
\begin{itemize}
    \item For any free $\mathbb Z/2$-space $X$, $\coind X\leq \hind X$. 
    \item Any free $\mathbb Z/2$-simplicial complex is $\mathbb Z/2$-homotopy equivalent to $B(H)$ for some graph $H$, see~\cite{csorba2007homotopy}.
    \item The difference between the homological index and the co-index can be arbitrary large~\cite{mmm}.
\end{itemize}
\section{Categorical products of hypergraphs}
The aim of this section is to present a topological lower bound for the categorical products of hypergraphs. Before proceed, let us first review some basic facts on hypergraphs. 
\\~\\
\textbf{Hypergraphs:}
A hypergraph $\mathcal{H}$ is a pair $\mathcal{H} = (V, E)$ where $V$ is a finite set of elements called vertices, and $E$ is a set of non-empty subsets of $V$ called edges. 
The vertex set and the edge set of a hypergraph $\mathcal{H}$ are often denoted by $V(\mathcal{H})$ and $E(\mathcal{H})$, respectively. An $m$-coloring of a hypergraph $\mathcal{H}$ is a map $c: V(\mathcal{H})\to\{1,\ldots, m\}$. Moreover, $c$ is called a proper $m$-coloring if it creates no monochromatic edge, i.e., $|c(e)|\geq 2$ for all $e\in\mathcal{H}$. We say a hypergraph is $m$-colorable if it admits a proper $m$-coloring. The chromatic number of a hypergraph $\mathcal{H}$, denoted by $\chi (\mathcal{H})$, is the minimum $m$ such that $\mathcal{H}$ is $m$-colorable. For two hypergraphs $\mathcal{H}_1 = (V_1, E_1)$ and $\mathcal{H}_2 = (V_2, E_2)$ a homomorphism from $\mathcal{H}_1$ into $\mathcal{H}_2$ is a mapping $\psi : V_1\to V_2$ which sends any edge of $\mathcal{H}_1$ to an edge in $\mathcal{H}_2$, i.e.,  $\psi (e)\in E_2$ for all $e\in E_1$. An $r$-uniform hypergraph is a hypergraph such that all its edges have size $r$. A $2$-uniform hypergraph is simply called a graph. 
\\~\\

In $1992$, a generalization of Hedetniemi's conjecture was raised by Xuding Zhu~\cite{zhu1992chromatic}. Actually, Zhu conjectured that for given hypergraphs $\mathcal{H}_1$ and $\mathcal{H}_2$ we have 
$$\chi (\mathcal{H}_1\times\mathcal{H}_2)=\min\{\chi(\mathcal{H}_1), \chi(\mathcal{H}_2)\},$$
where the categorical product $\mathcal{H}_1\times\mathcal{H}_2$ is the hypergraph with the vertex set
$V(\mathcal{H}_1)\times V(\mathcal{H}_2)$
and whose edge set is
$$E(\mathcal{H}_1\times\mathcal{H}_2)=\{\{(u_1, v_1),\ldots, (u_s, v_s)\} : s\geq 1, \{u_1, \ldots, u_s\}\in E(\mathcal{H}_1), \{v_1, \ldots, v_s\}\in E(\mathcal{H}_2)\}.$$
Note that the product of two graphs with the above definition is not the same as the categorical product of two graphs mentioned earlier. Indeed, if $H_1$ and $H_2$ are two graphs and each has at least one edge, then $H_1\times H_2$ with this new product is not even a graph, since it has edges of size $3$ and $4$. However, it is fairly easy to see that the graph categorical product of two graphs and the hypergraph product of the same two graphs have the same chromatic number. This confirms the claim that Zhu's conjecture is a generalization of Hedetniemi's conjecture. Although Shitov's result~\cite{shitov2019} shows this conjecture is not true in general, we believe it is an interesting question to find families of hypergraphs which satisfy the conjecture. The first nontrivial family of hypergraphs satisfying this conjecture was presented in $2016$ by H. Hajiabolhassan and F. Meunier~\cite{hajiabolhassan2016hedetniemi}. Actually, they proved that Zhu's conjecture is true for every pair of usual Kneser $r$-uniform hypergraphs. For more recent work, see~\cite{sani2018coloring}.

In this section, we define the compatibility $r$-uniform hypergraph $\mathcal{C}_P^{(r)}$ which is assigned to a $G$-poset $P$; a partially ordered set equipped with a group action $G$. Then, we establish a connection between the chromatic number of this hypergraph and a topological property of the order complex of $P$. Via this connection, we will present a topological lower bound for the chromatic number of the categorical product of two hypergraphs. Consequently, we enrich the family of known hypergraphs satisfying the conjecture. In particular, we will obtain the Hajiabolhassan--Meunier result via this bound. To mention our results precisely, we need some definitions that will be presented in the next subsection. 

\subsection{Compatibility $r$-uniform hypergraph}
In this subsection, we define the compatibility $r$-uniform hypergraph, the main object of our study here. First, we review some standard facts on $G$-posets.
\\~\\
\textbf{G-posets:}
Here and subsequently, $G$ stands for a finite non-trivial group and its identity element is denoted by $e$. Remember, any two elements $a, b$ of a poset $(P,\preceq)$ are said comparable if $a\preceq b$ or $b\preceq a$. 
\begin{definition}
A $G$-poset is a poset $(P, \preceq)$ equipped with a $G$-action on its elements that preserves the partial order, i.e., $a\preceq b$ implies $g\cdot a\preceq g\cdot b$ for all $g\in G$ and $a, b\in P$.
\end{definition}
A $G$-poset $P$ is called free $G$-poset, if for all $p\in P$, $g\cdot p=p$ implies $g=e$.  Recall, the order complex $\Delta (P)$ of a poset $(P, \preceq)$ is the simplicial complex whose vertices are the elements of $P$ and whose simplices are all chains in $P$; i.e, $$\Delta (P)=\{\{x_1\prec\cdots\prec x_n\}: n\geq 1\quad x_1,\ldots, x_n\in P\}.$$ Finally, a face poset $P(\mathcal{K})$ of a simplicial complex $\mathcal{K}$ is a poset whose elements are simplices of $K$ ordered by inclusion. In addition, if $\mathcal{K}$ is a (free) $G$-simplicial complex, then its face poset $P(\mathcal{K})$ is a (free) $G$-poset with the induced action of $G$. 
\\~\\
\begin{definition}[compatibility $r$-uniform hypergraph]
Let $P$ be a $G$-poset, and $r\geq 2$ be a positive integer. The compatibility $r$-uniform hypergraph of $P$, denoted by ${\mathcal{C}}_P^{(r)}$, 
is the $r$-uniform hypergraph that has $P$ as vertex set and whose subset $e=\{p_1,\ldots, p_r\}$ of r$ $distinct elements of $P$ forms an edge of ${\mathcal{C}}_P^{(r)}$ if and only if for each distinct elements $p_i, p_j\in e$ there is an element $g\in G\setminus\{e\}$ such that $p_i$ and $g\cdot p_j$ are comparable in $P$.
\end{definition}
It is worth pointing out that the graph version of compatibility $r$-uniform hypergraph, i.e., when $r=2$ in Definition $1$, is defined in~\cite{daneshpajouh2017topological}. Throughout the paper, we simply write $\mathcal{C}_{P}$ instead of $\mathcal{C}_{P}^{(2)}$. We refer the interested reader to~\cite{daneshpajouh2018dold, daneshpajouh2018new, daneshpajouh2017topological} for more information on compatibility graphs and their applications. We emphasize that some variants of compatibility graphs, for $G=\mathbb Z/2$, were defined before by several authors~\cite{csorba2007homotopy, vzivaljevic2005wi, walker1983graphs}. Now, we are in a position to state our first result.
\begin{theorem}
\label{Thm:2}
Let $p$ be a prime number, $\mathbb Z/p$ the cyclic group of order $p$, and $P$ a free $\mathbb Z/p$-poset. Then, for every positive integer $r$ with $2\leq r\leq p$ we have:
$$\left\lceil\frac{\ind\left(||\Delta (P)||\right)+1}{r-1}\right\rceil+ \left\lceil\frac{p}{r-1}\right\rceil-1\leq\chi\left(\mathcal{C}_P^{(r)}\right).$$
\end{theorem}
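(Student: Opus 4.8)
The plan is to fix a proper coloring $c\colon P\to\{1,\dots,m\}$ realizing $m=\chi(\mathcal C_P^{(r)})$ and to manufacture from it a $\mathbb Z/p$-equivariant map $||\Delta(P)||\to E_k\mathbb Z/p$ with $k=(r-1)\bigl(m-\lceil p/(r-1)\rceil+1\bigr)-1$. The existence of such a map gives $\ind(||\Delta(P)||)\le k$ immediately from the definition of the mapping index, and solving this inequality for $m$ (using that for an integer $n$ one has $n\ge\lceil x\rceil$ iff $n\ge x$) reproduces the asserted bound. The first, purely combinatorial, step is a reformulation of properness: a set $e=\{p_1,\dots,p_r\}$ is an edge of $\mathcal C_P^{(r)}$ exactly when every pair $p_i,p_j$ is compatible, i.e.\ when $e$ is an $r$-clique of the compatibility graph $\mathcal C_P$; hence a coloring is proper if and only if every color class is $K_r$-free in $\mathcal C_P$, that is, no color class contains $r$ pairwise compatible elements.

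The central step is the construction of the map, which I would phrase through a $\mathbb Z/p$-Tucker-type argument in the spirit of Hajiabolhassan--Meunier \cite{hajiabolhassan2016hedetniemi}. Concretely, I would encode the situation as a $\mathbb Z/p$-equivariant labeling of the simplices of a barycentric subdivision of $\Delta(P)$ by pairs (color, group element) in $\{1,\dots,m\}\times\mathbb Z/p$, equivariant in the second coordinate and built from $c$ together with the free $\mathbb Z/p$-action. The compatibility structure is what makes the bridge: a chain of $\Delta(P)$ together with nontrivial group translates of its vertices realizes comparabilities, hence compatibilities, so that several same-color vertices appearing in such a configuration form a clique of $\mathcal C_P$. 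If the number of colors stays below the threshold, the Tucker-type lemma forces an \emph{alternating} simplex whose vertices carry a single common color but run through all $p$ group elements; since $r\le p$, one may then select $r$ of these shifts that are pairwise compatible, producing a monochromatic edge of $\mathcal C_P^{(r)}$ and contradicting properness.

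The two ceiling terms should arise for structurally different reasons, which is precisely why they cannot be merged into a single $\lceil(\ell+p)/(r-1)\rceil$. The term $\lceil(\ell+1)/(r-1)\rceil$ accounts for the $\ell+1$ topological directions carried by $\Delta(P)$, with $\ell=\ind(||\Delta(P)||)$, packed $r-1$ at a time because a color class may safely hold up to $r-1$ mutually compatible elements before an edge is forced; the term $\lceil p/(r-1)\rceil$ accounts for the $p$ equivariant directions of the free $\mathbb Z/p$-action, packed in the same way. Carrying these two contributions separately through the count, rather than summing them first, is what yields the exact expression $\lceil(\ell+1)/(r-1)\rceil+\lceil p/(r-1)\rceil-1$.

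The hard part will be the equivariant bookkeeping in the middle step. A coloring is not itself equivariant, so one must symmetrize it over the $\mathbb Z/p$-orbits of $P$ while arranging that ``one color realized in all $p$ shifts'' is exactly the obstruction to the labeling, and one must check that the dimension of the target join is governed by the $K_r$-free hypothesis with the correct constants. Making the ceiling arithmetic match on the nose---so that the independent packings of the $\ell+1$ homotopical directions and the $p$ group directions combine to the stated formula rather than to something weaker---is the delicate point, and is presumably where both the precise form of the $\mathbb Z/p$-Tucker lemma and the hypothesis $r\le p$ are essential.
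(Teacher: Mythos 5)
Your global architecture is the right one, and it matches the paper's: convert a proper coloring $c$ of $\mathcal{C}_P^{(r)}$ into a $\mathbb Z/p$-equivariant map out of $||\Delta(P)||$ into a free complex of dimension $(r-1)\bigl(\chi(\mathcal{C}_P^{(r)})-\lceil p/(r-1)\rceil+1\bigr)-1$, then use monotonicity of $\ind$ together with the fact that the index of a free complex is at most its dimension, and finish with exactly the ceiling arithmetic you describe (your separation of the two ceiling terms, and your observation that orbits force the $\lceil p/(r-1)\rceil$ term, are both correct). But the central step is a genuine gap as you have left it: there is no off-the-shelf $\mathbb Z/p$-Tucker-type lemma for an arbitrary free $\mathbb Z/p$-poset. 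The Ziegler--Meunier style lemmas used by Hajiabolhassan--Meunier are statements about labelings of specific standard complexes (power sets of $[n]$, i.e.\ barycentric subdivisions of simplices), and they cannot be transported to $\Delta(P)$ here, because the hypothesis of the theorem concerns $\ind||\Delta(P)||$ --- maps \emph{out of} $\Delta(P)$ --- whereas pulling a labeling problem back from a standard complex into $\Delta(P)$ would require an equivariant map \emph{into} $\Delta(P)$, i.e.\ a $\coind$ hypothesis. A ``Tucker-type lemma valid on arbitrary free complexes with threshold tied to $\ind$'' is true, but proving it \emph{is} the construction you are deferring, so the plan is circular at its core. The paper fills this hole with a short direct construction needing no subdivision and no Tucker lemma: in each (free, size-$p$) orbit $[x]$ choose the representative $x'$ minimizing $c$; since every $r$-subset of an orbit is an edge (this is where $r\le p$ enters), each orbit uses at least $\lceil p/(r-1)\rceil$ colors, so $c(x')\le m:=\chi(\mathcal{C}_P^{(r)})-\lceil p/(r-1)\rceil+1$; set $\lambda(g\cdot x')=(g,c(x'))$ and check this is a simplicial equivariant map $\Delta(P)\to \bigl(\sigma_G^{(r-2)}\bigr)^{*m}$, where $\sigma_G^{(r-2)}$ is the complex of subsets of $G$ of size at most $r-1$. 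Your ``symmetrization'' worry is resolved precisely by this min-color representative trick.

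A second, quantitative miscalibration: you predict the obstruction to be an alternating simplex carrying one color through \emph{all} $p$ group elements, from which you would select $r$ compatible shifts. To obtain the stated bound with $r-1$ in the denominators, the forbidden pattern must fire already at $r$ pairwise distinct group elements sharing one color along a chain $x_1\prec\cdots\prec x_r$: then $x_i'=g_i^{-1}\cdot x_i\prec (g_i^{-1}g_j)\cdot x_j'$ with $g_i^{-1}g_j\neq e$ exhibits $\{x_1',\dots,x_r'\}$ as a monochromatic edge, contradicting properness --- and, dually, it is exactly this $r$-level condition that caps each color block of the target at $r-1$ group elements and gives the dimension $m(r-1)-1$. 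A lemma that only fires when all $p$ shifts appear would cap color blocks at $p-1$ elements and yield the weaker Alon--Frankl--Lov\'asz-type bound with $p-1$ in place of $r-1$, contradicting your own (correct) ``packed $r-1$ at a time'' heuristic. So the route is salvageable, but only by replacing the unspecified Tucker lemma with the explicit orbit-wise map construction, at which point it coincides with the paper's proof.
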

\begin{proof}
For simplicity of notation, set $G=\mathbb Z/p$, $n= \chi\left(\mathcal{C}_P^{(r)}\right)$, and $m= n-\left\lceil\frac{|G|}{r-1}\right\rceil+1$. Let $\sigma_{G}^{(r-2)}$ be a simplicial complex whose vertex set is $G$ and whose simplices are all subsets of $G$ of size at most $(r-1)$. Note that $G$ acts freely on $\sigma_{G}^{(r-2)}$. Now, suppose that $c : V\left(\mathcal{C}_P^{(r)}\right)\to\{1, \ldots , n\}$ is a proper coloring of $\mathcal{C}_P^{(r)}$. We show that this coloring induces a simplicial $G$-equivariant map
\begin{align*}
  \lambda: \Delta (P)\longrightarrow \underbrace{\sigma_{G}^{(r-2)}\ast\cdots\ast\sigma_{G}^{(r-2)}}_{m}.
\end{align*}
Note that such a map on the level of vertices must take $P$ to $G\times\{1,\ldots, m\}$.

We partition $P$ into disjoint equivalence classes, the orbits, under the $G$-action. Remember, the equivalence class containing $x$ is $[x]=\{g\cdot x: g\in G\}$. Take an arbitrary class $[x]$. We pick one element, say $x^{\prime}$, from $[x]$ with the property that
$$c(x^{\prime})= \min\{c(y) : y\in [x]\},$$
as a representative, and we set $\lambda (x^{\prime}) = (e, c(x^{\prime}))$. Then, we extend $\lambda$ on the remaining elements of $[x]$ as follows.
$$\lambda(g\cdot x^{\prime})= (g,c(x^{\prime}))\quad\forall g\in G.$$
We repeat the same procedure for the other classes as well.

Let us verify that $\lambda$ is a well-defined function. For this purpose, we need to show that any point in $[x^{\prime}]$, say $x$, can be uniquely represented as $x= g\cdot x^{\prime}$ for some $g\in G$. Now, suppose that $x= g\cdot x^{\prime}= h\cdot x^{\prime}$ for some $g, h\in G$. Then $(h^{-1}g)\cdot x^{\prime}= x^{\prime}$. So $h^{-1}g=e$, as $P$ is a free $G$-poset. Thus, $h=g$. Moreover, for every $x\in P$ we have $|\{c(y): y\in [x]\}|\geq \left\lceil\frac{|G|}{r-1}\right\rceil$, as every $r$-subset of $[x]$ forms an edge in ${C}_P^{(r)}$. Therefore, $\lambda$ takes its values in $G\times\{1,\ldots, m\}$.

Next, we show that $\lambda$ is a simplicial $G$-equivariant map. Clearly, by the definition of $\lambda$, this map preserves the $G$-action. So, to prove our claim we just need to show that $\lambda$ is a simplicial map, i.e, takes any simplex to a simplex. Note that $F_1\uplus\cdots\uplus F_{m}$ is a simplex of $\underbrace{\sigma_{G}^{(r-2)}\ast\cdots\ast\sigma_{G}^{(r-2)}}_{m}$ if and only if $|F_i|\leq (r-1)$ for $i= 1, \ldots, m$. Therefore, $\lambda$ is a simplicial map if for every  $x_1\prec\cdots\prec x_r\in P$  with $\lambda(x_1)=(a_1, b_1),\ldots, \lambda(x_r)=(a_r, b_r)$, if all $b_i$'s are equal, then all $a_i$'s are not pairwise disjoint. Now, let $x_1\prec\cdots\prec x_r$ be $r$ distinct elements of $P$ such that:
$$\left(\text{for}\quad i=1,\ldots,r\quad\lambda (x_i) = (g_i, c(x_i^{\prime}))\right)\quad \&\quad \left(c(x_1^{\prime})=\ldots=c(x_r^{\prime})\right),$$
where $x_i^{\prime}$ is the representative of the class $[x_i]$. To finish the proof, we need to show that all $g_i'$s cannot be pairwise distinct. Since otherwise, on the one hand we have $x_i^{\prime} = g_i^{-1}\cdot x_i$ for all $1\leq i\leq r$. On the other hand, for every $1\leq i < j\leq r$ 
$$\underbrace{g_i^{-1}\cdot x_i}_{x_i^{\prime}}\prec g_i^{-1}\cdot x_j = (g_i^{-1}g_j)\underbrace{g_j^{-1}\cdot x_j}_{x_j^{\prime}}.$$ 
These imply that $\{x_1^{\prime}, \ldots x_r^{\prime}\}$ is an edge in $\mathcal{C}_P^{(r)}$, as $g_i^{-1}g_j\neq e$ for all $i\neq j$. This contradicts the fact that $c$ is a proper coloring of $\mathcal{C}_P^{(r)}$. Therefore, $\lambda$ is a $G$-simplicial map. This map naturally induces a $G$-equivariant map from $||\Delta (P)||$ to $||\underbrace{\sigma_{G}^{(r-2)}\ast\cdots\ast\sigma_{G}^{(r-2)}}_{m}||$.  Therefore, according to monotonicity of mapping-index and the fact that the mapping index of a free $G$-polyhedra is bounded above by its dimension \cite[Proposition 6.2.4]{matousek2003using}, we have
\begin{align*}
  \ind ||\Delta (P)||\leq\dim\left(||\underbrace{\sigma_{G}^{(r-2)}\ast\cdots\ast\sigma_{G}^{(r-2)}}_{m}||\right) & = m(r-1)-1\\
  & = (\chi(\mathcal{C}_P^{(r)})-\left\lceil\frac{p}{r-1}\right\rceil+1)(r-1)-1,
\end{align*}
as $||\underbrace{\sigma_{G}^{(r-2)}\ast\cdots\ast\sigma_{G}^{(r-2)}}_{m}||$ is free.
Consequently,
$$\left\lceil\frac{\ind ||\Delta (P)||+1}{r-1}\right\rceil+ \left\lceil\frac{p}{r-1}\right\rceil-1\leq\chi\left(\mathcal{C}_P^{(r)}\right),$$ 
which is the desired conclusion. 
\end{proof}
To solve a conjecture of Erd\H{o}s~\cite{erdos1973problems}, a generalization of the box complex for $r$-uniform hypergraphs was introduced by Alon--Frankl--Lov\'{a}sz~\cite{alon1986chromatic} as follows.
\begin{definition}[$B_{\text{edge}}(\mathcal{H})$]
Let $\mathcal{H}$ be an $r$-uniform hypergraph,   
\begin{align*}
  \pi_i :  \underbrace{V(\mathcal{H})\times\cdots\times V(\mathcal{H})}_{r} & \longrightarrow V(\mathcal{H})\\
  & (u_1,\ldots, u_i, \ldots , u_r)\longmapsto u_i,
\end{align*}
the projection on the $i$-th coordinate, and $\pi_{i}(T)=\{\pi_i(x): x\in T\}$ for any $T\subseteq{\left(V(\mathcal{H})\right)}^r$. The Alon--Frankl--Lov\'{a}sz complex of $\mathcal{H}$ is the simplicial complex $B_{\text{edge}}(\mathcal{H})$\footnote{They originally used the notation $C(\mathcal{H})$ instead of $B_{\text{edge}}(\mathcal{H})$. We have this notation from~\cite{thansri2012simple}.} whose vertices are all the ordered $r$-tuples $(u_1,\ldots, u_r)$ of
vertices of $\mathcal{H}$ with the property that $\{u_1,\cdots , u_r\}\in E(\mathcal{H})$. A set $T=\{(u^i_l,\ldots,u^i_r): i\in I\}$ of vertices of $B_{\text{edge}}(\mathcal{H})$
forms a simplex if $\pi_1(T),\ldots,$ $\pi_r(T)$ are pairwise disjoint and $\{x_1, \cdots, x_r\}\in E(\mathcal{H})$ for any choosing $x_1\in\pi_1(T), \ldots, x_r\in\pi_r(T)$.
\end{definition}
Note that the cyclic group $\mathbb Z/r$ acts on the poset $B_{\text{edge}}(\mathcal{H})$ naturally by cyclic shift, and turns it to a free $\mathbb Z/r$-simplicial complex. Here and subsequently, we consider $B_{\text{edge}}(\mathcal{H})$ as a $\mathbb Z/r$-simplicial complex with the mentioned $\mathbb Z/r$-action. Similar to the graph version, the following theorem reveals the main motivation behind the definition of this object.
\begin{theorem}[The Alon--Frankl--Lov\'{a}sz bound~\cite{alon1986chromatic}\footnote{The lower bound was given in term of a weaker topological parameter, connectivity, in the original paper.}]
\label{Thm: alon}
Let $p$ be a prime number and $\mathcal{H}$ be an $p$-hypergraph. Then   
$$\chi\left(\mathcal{H}\right)\geq 1+ \left\lceil\frac{\ind||B_{\text{edge}}(\mathcal{H})||+1}{p-1}\right\rceil.$$
\end{theorem}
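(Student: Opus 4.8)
The plan is to mirror the proof of Theorem~\ref{Thm:2}: from a proper coloring of $\mathcal H$ with $\chi(\mathcal H)$ colors I will manufacture a continuous $\mathbb Z/p$-equivariant map out of $B_{\text{edge}}(\mathcal H)$ into a free $\mathbb Z/p$-complex of small dimension, and then read off the index bound from that dimension. Concretely, set $G=\mathbb Z/p$ and $n=\chi(\mathcal H)$, let $c\colon V(\mathcal H)\to\{1,\dots,n\}$ be a proper coloring, put $m=n-1$, and take as target the free $G$-complex $Y=\underbrace{\sigma_G^{(p-2)}\ast\cdots\ast\sigma_G^{(p-2)}}_{m}$ from the proof of Theorem~\ref{Thm:2} specialized to $r=p$, whose dimension is $m(p-1)-1=(n-1)(p-1)-1$.

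First I would define the map on the vertices of $B_{\text{edge}}(\mathcal H)$. A vertex is an ordered edge $v=(u_1,\dots,u_p)$; since $c$ is proper its color set is not a singleton, so the minimum color $b(v)=\min_i c(u_i)$ satisfies $b(v)\in\{1,\dots,n-1\}=\{1,\dots,m\}$, while the set of minimizing positions $S(v)=\{i : c(u_i)=b(v)\}\subseteq G$ is a nonempty proper subset of $G$, i.e.\ a face of $\sigma_G^{(p-2)}$. I send $v$ to the barycenter of the face $S(v)\times\{b(v)\}$ of $Y$ and extend affinely over the simplices of $B_{\text{edge}}(\mathcal H)$. Equivariance is immediate: the cyclic shift leaves $b(v)$ unchanged and merely translates $S(v)$, hence translates the barycenter accordingly.

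The heart of the argument---and the step I expect to be the only real obstacle---is checking that this affine extension is well defined, i.e.\ that every simplex $T$ of $B_{\text{edge}}(\mathcal H)$ is carried into a single face of $Y$. This amounts to showing that for each color level $\ell$ the block $\bigcup_{v\in T,\,b(v)=\ell}S(v)$ is a proper subset of $G$. Suppose not: then for some $\ell$ and every position $j\in G$ there is a tuple $v\in T$ with $b(v)=\ell$ and $j\in S(v)$, so that its $j$-th coordinate lies in $\pi_j(T)$ and is colored $\ell$. Choosing one such coordinate $x_j\in\pi_j(T)$ for each $j$ and invoking the two defining properties of a simplex of $B_{\text{edge}}$---that the projections $\pi_1(T),\dots,\pi_p(T)$ are pairwise disjoint and that every transversal of them is an edge of $\mathcal H$---the set $\{x_1,\dots,x_p\}$ is an edge all of whose vertices have color $\ell$, a monochromatic edge contradicting properness of $c$. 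Hence each block is proper, and the map is a well-defined continuous $G$-map. Note that primality of $p$ enters only through the freeness of the actions on $B_{\text{edge}}(\mathcal H)$ and on $Y$.

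Finally I would invoke monotonicity of the mapping index together with the fact that the index of a free $G$-polyhedron does not exceed its dimension \cite[Proposition~6.2.4]{matousek2003using}, obtaining $\ind\|B_{\text{edge}}(\mathcal H)\|\le\dim Y=(n-1)(p-1)-1$. Rearranging gives $\bigl(\ind\|B_{\text{edge}}(\mathcal H)\|+1\bigr)/(p-1)\le n-1$, and since $n-1$ is an integer this yields $\lceil(\ind\|B_{\text{edge}}(\mathcal H)\|+1)/(p-1)\rceil\le n-1$, that is $\chi(\mathcal H)=n\ge 1+\lceil(\ind\|B_{\text{edge}}(\mathcal H)\|+1)/(p-1)\rceil$, as claimed.
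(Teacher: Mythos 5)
Your proof is correct, but it reaches the bound by a genuinely different route from the paper's. The paper does not reprove the Alon--Frankl--Lov\'asz bound from scratch: it deduces it from Theorem~\ref{Thm:2} by first passing from $\mathcal H$ to the compatibility hypergraph of the face poset, $\chi(\mathcal H)\ge \chi\bigl(\mathcal{C}^{(p)}_{P(B_{\text{edge}}(\mathcal H))}\bigr)$ via the homomorphism of Lemma~\ref{lem: 9} (whose proof is a separate disjointness-of-projections argument), then applying Theorem~\ref{Thm:2} with $r=p$ to the free $\mathbb Z/p$-poset $P(B_{\text{edge}}(\mathcal H))$, and finally identifying $\|\Delta(P(B_{\text{edge}}(\mathcal H)))\|$ with $\|B_{\text{edge}}(\mathcal H)\|$ as its barycentric subdivision; note that the correction term $\lceil p/(p-1)\rceil-1=1$ in Theorem~\ref{Thm:2} gives exactly your $m=n-1$. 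You instead build the equivariant map $\|B_{\text{edge}}(\mathcal H)\|\to \sigma_G^{(p-2)}\ast\cdots\ast\sigma_G^{(p-2)}$ directly from the proper coloring of $\mathcal H$, replacing the paper's orbit-representatives-of-minimal-color device by the pair (minimum color $b(v)$, set of minimizing positions $S(v)$), and your key well-definedness check is sound: the convex hull of barycenters of faces lies in the complex precisely when the union of those faces is a face, and a full block at some color level $\ell$ would yield a transversal $x_j\in\pi_j(T)$ forming a monochromatic edge of $\mathcal H$ --- so the transversal property of simplices of $B_{\text{edge}}$ does here the work that Lemma~\ref{lem: 9} does in the paper. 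Your route is shorter and self-contained for this one theorem; the paper's route buys the generality of Theorem~\ref{Thm:2} (arbitrary free $\mathbb Z/p$-posets and all $2\le r\le p$), which it needs anyway for Corollaries~\ref{Thm:3}, \ref{cor: 10} and \ref{cor: 11}. One convention point deserves a line in a final write-up: with the shift $\zeta\cdot(u_1,\dots,u_p)=(u_2,\dots,u_p,u_1)$ one gets $S(\zeta\cdot v)=S(v)-1$, while the natural translation action on $\sigma_G^{(p-2)}$ moves labels by $+1$; this is harmless --- compose the target action with the automorphism $g\mapsto g^{-1}$ of $\mathbb Z/p$ (the action remains free and the dimension count is unchanged) --- but as stated ``translates accordingly'' glosses over the sign. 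Also, strictly speaking primality is needed only for the freeness of the action on the join of the complexes $\sigma_G^{(p-2)}$ (no nonempty subset of $G$ of size at most $p-1$ is invariant under a nontrivial element); freeness of the cyclic-shift action on $B_{\text{edge}}(\mathcal H)$ follows for any $r$ from the pairwise disjointness of the projections.
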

As a simple application of Theorem \ref{Thm:2}, let us recover the Alon--Frankl--Lov\'{a}sz bound here. Consider the face poset of $B_{\text{edge}}(\mathcal{H})$, $P(B_{\text{edge}}(\mathcal{H}))$, as a free $\mathbb Z/r$-poset with the induced action of $B_{\text{edge}}(\mathcal{H})$. 
\begin{lemma}
\label{lem: 9}
Let $r\geq 2$ be a positive integer, and $\mathcal{H}$ be an $r$-uniform hypergraph. Then, there is a hypergraph homomorphism from $\mathcal{C}_{P(B_{\text{edge}}(\mathcal{H}))}^{(r)}$ to $\mathcal{H}$ which in particular implies that $\chi(\mathcal{C}_{P(B_{\text{edge}}(\mathcal{H}))}^{(r)})\leq\chi(\mathcal{H})$.  
\end{lemma}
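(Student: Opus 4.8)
The plan is to construct an explicit hypergraph homomorphism $\psi\colon \mathcal{C}_{P(B_{\text{edge}}(\mathcal{H}))}^{(r)}\to\mathcal{H}$; the ``in particular'' clause is then automatic, since composing a proper coloring of $\mathcal{H}$ with $\psi$ gives a coloring of the compatibility hypergraph whose image on each edge is an edge of $\mathcal{H}$, hence non-monochromatic, so $\chi(\mathcal{C}_{P(B_{\text{edge}}(\mathcal{H}))}^{(r)})\le\chi(\mathcal{H})$. Write $T$ for the generator of $\mathbb{Z}/r$ acting on $B_{\text{edge}}(\mathcal{H})$ by cyclic shift, with the convention $T(u_1,\dots,u_r)=(u_2,\dots,u_r,u_1)$, so that on a simplex $\pi_i(T\sigma)=\pi_{i+1}(\sigma)$ (indices modulo $r$). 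Since $\mathbb{Z}/r$ acts freely on the simplices of $B_{\text{edge}}(\mathcal{H})$, I would fix one simplex in each orbit together with a vertex (an edge-tuple) chosen inside it, and transport these choices by the action to obtain an equivariant selection $\sigma\mapsto v_\sigma$ with $v_\sigma\in\sigma$ and $v_{T\sigma}=Tv_\sigma$. The homomorphism is then defined on the elements $\sigma$ of the face poset by $\psi(\sigma)=\pi_1(v_\sigma)$, the first coordinate of the chosen tuple; equivariance yields the bookkeeping identity $\psi(T^k\sigma)=\pi_{1+k}(v_\sigma)$.

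To verify that $\psi$ sends edges to edges, let $\{\sigma_1,\dots,\sigma_r\}$ be an edge of $\mathcal{C}_{P(B_{\text{edge}}(\mathcal{H}))}^{(r)}$, so that for every pair $i\ne j$ there is $k_{ij}\not\equiv 0\pmod r$ with $\sigma_i$ and $T^{k_{ij}}\sigma_j$ comparable in the face poset. The key is to realize the $r$ values $\psi(\sigma_i)$ as a transversal of a single simplex $\Sigma$ of $B_{\text{edge}}(\mathcal{H})$, because any transversal of the pairwise-disjoint parts $\pi_1(\Sigma),\dots,\pi_r(\Sigma)$ is, by definition of $B_{\text{edge}}$, an edge of $\mathcal{H}$. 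I would read the relation ``$\sigma_i\subseteq T^{k}\sigma_j$ for some $k\ne 0$'' as a tournament on $\{1,\dots,r\}$ (each pair yields at least one such arc), choose a directed Hamiltonian path, and reindex so that $\sigma_i\subseteq T^{a_i}\sigma_{i+1}$ with each $a_i\not\equiv 0$. Setting $s_0=0$, $s_i=s_{i-1}+a_i$ and applying powers of $T$, the shifted copies nest,
\[
T^{s_0}\sigma_1\subseteq T^{s_1}\sigma_2\subseteq\cdots\subseteq T^{s_{r-1}}\sigma_r=:\Sigma,
\]
so $\Sigma$ is a simplex containing $T^{s_{i-1}}\sigma_i=v$-compatibly $T^{s_{i-1}}v_{\sigma_i}\in\Sigma$. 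Rewriting $\pi_1(v_{\sigma_i})$ as a coordinate of $T^{s_{i-1}}v_{\sigma_i}$ gives $\psi(\sigma_i)\in\pi_{1-s_{i-1}}(\Sigma)$ for every $i$.

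It then remains to see that these images occupy $r$ distinct parts, i.e.\ that the partial sums $s_0,\dots,s_{r-1}$ are pairwise distinct modulo $r$; granting this, $\{\psi(\sigma_1),\dots,\psi(\sigma_r)\}$ is a transversal of $\Sigma$, hence an edge of $\mathcal{H}$, its elements being distinct since they lie in disjoint parts. This distinctness of the partial sums is the main obstacle: for an arbitrary Hamiltonian path a coincidence $s_{i-1}=s_{j-1}$ can occur, and such a coincidence forces an \emph{unshifted} inclusion $\sigma_i\subseteq\sigma_j$ (or the reverse) between two members of the edge. The way I would overcome it is to exploit that the edge hypothesis guarantees a \emph{nonzero} shift witness for that same pair: whenever a coincidence occurs, I would re-route the Hamiltonian path or re-choose the witnesses---using the additivity $\sigma_a\subseteq T^{u}\sigma_b\subseteq T^{u+v}\sigma_c$ of shifts along the chain---so as to strictly reduce the number of coinciding partial sums, and conclude by descent that a collision-free chain exists. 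For $r=2$ the path is a single arc and the two sums $0,a_1$ are automatically distinct, recovering at once the known graph statement; the genuine combinatorial content of the lemma is thus entirely concentrated in this last transversal step for $r\ge 3$.
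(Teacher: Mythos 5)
Your construction is fine up to the crux, but the crux itself is left unproven, and it is a genuine gap. The selection $\psi(\sigma)=\pi_1(v_\sigma)$, the tournament/Hamiltonian-path chain $T^{s_0}\sigma_1\subseteq T^{s_1}\sigma_2\subseteq\cdots\subseteq T^{s_{r-1}}\sigma_r=\Sigma$, and the reduction to ``the partial sums $s_0,\dots,s_{r-1}$ are pairwise distinct mod $r$'' are all correct. But you then dispose of possible coincidences by asserting that one can ``re-route the Hamiltonian path or re-choose the witnesses \dots and conclude by descent that a collision-free chain exists.'' You never show that a re-routing step strictly reduces the number of coincidences, nor that the process terminates in a collision-free chain; since you yourself note that the entire combinatorial content of the lemma for $r\ge 3$ is concentrated in this step, the proof as written is incomplete.

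The gap closes by a direct contradiction, making the descent unnecessary: a coincidence $s_{i-1}\equiv s_{j-1}\pmod r$ with $i<j$ gives, via your chain, the unshifted inclusion $\sigma_i\subseteq\sigma_j$, while the edge hypothesis supplies for the \emph{same} pair a witness $b\not\equiv 0\pmod r$ with $\sigma_i$ and $T^b\sigma_j$ comparable. If $\sigma_i\subseteq T^b\sigma_j$, then the nonempty set $\pi_1(\sigma_i)$ lies in both $\pi_1(\sigma_j)$ and $\pi_{1+b}(\sigma_j)$, two disjoint parts of the simplex $\sigma_j$, a contradiction; if $T^b\sigma_j\subseteq\sigma_i\subseteq\sigma_j$, then $T^b\sigma_j=\sigma_j$ by cardinality, whence $\pi_1(\sigma_j)=\pi_{1+b}(\sigma_j)$, again contradicting disjointness (equivalently, freeness of the action). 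So collisions simply cannot occur in any chain. This is exactly the paper's argument, organized more economically: the paper takes $\psi(T)$ to be an \emph{arbitrary} element of $\pi_1(T)$ (your equivariant selection $v_\sigma$ buys nothing), sorts the members of the edge by cardinality so that every comparability becomes a containment $T_j\subseteq\zeta^{s_j}\cdot T_r$ into a shift of the single largest simplex $T_r$ --- dispensing with the Hamiltonian chain entirely --- and proves the shifts $s_j$ pairwise distinct by precisely the disjoint-projections contradiction above.
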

\begin{proof}
During the proof, the symbol $|X|$ is used for the cardinality of the set $X$. Also, let $\zeta$ be the generator of the cyclic group $\mathbb Z/r$, i.e, $\mathbb Z/r=\{e=\zeta^0, \zeta^1,\ldots, \zeta^{r-1}\}$. Now, we show that any map like $\psi : \mathcal{C}_{P(B_{\text{edge}}(\mathcal{H}))}^{(r)}\to \mathcal{H}$ that sends each element $T=\{(u^i_1,\ldots,u^i_r): i\in I\}$ of $\mathcal{C}_{P(B_{\text{edge}}(\mathcal{H}))}^{(r)}$ to an arbitrary element of $\pi_{1}(T)$ is a hypergraph homomorphism. Suppose that the vertices 
$$T_1=\{(u^i_1,\ldots,u^i_r): i\in I_1\}, \ldots , T_r=\{(u^i_1,\ldots,u^i_r): i\in I_r\}$$ forms an edge in $\mathcal{C}_{P(B_{\text{edge}}(\mathcal{H}))}^{(r)}$. To verify our claim, we need to show that $\{u_1, \ldots, u_r\}$ is an edge of $\mathcal{H}$ when $u_1\in \pi_1(T_1), \ldots, u_r\in\pi_r(T_r)$. Without loss of generality assume $|T_1|\leq\cdots\leq |T_r|.$
Now, by definition of compatibility hypergraph, for any $1\leq j\leq r-1$ there is an $1\leq s_j\leq r-1$ such that $T_{j}\subseteq\zeta^{s_j}\cdot T_{r}$. These imply that
 \[                                                                                                                               
\left\{                                                                                                                             \begin{array}{rcl}                                                                                                           
u_1\in \pi_1(T_1) & \subseteq & \pi_{(s_{1}+1\mod r)}(T_r)\\                                                                  
\cdots  & & \cdots \\                                                                                                          
u_{r-1}\in \pi_1(T_{r-1}) & \subseteq & \pi_{(s_{r-1}+1\mod r)}(T_r)\\                                                         u_{r}\in \pi_1(T_r)                                                                                                           
\end{array}                                                                                                                     
\right.                                                                                                                              \]
We claim that all the $s_i$'s are pairwise distinct. Suppose the contrary, that is there exist $1\leq i < j\leq r-1$ such that 
$s_i= s_j$. Set $s_i= s_j= a$. Also, there is a $1\leq b\leq r-1$ such that $T_i\subseteq\zeta^b\cdot T_j$.
Therefore
\[   
     \begin{cases}
        T_i\subseteq\zeta^{a}\cdot T_{r}\\
        T_j\subseteq\zeta^{a}\cdot T_{r}\\
        T_i\subseteq\zeta^{b}\cdot T_j\
     \end{cases}
\Longrightarrow
     \begin{cases}
       T_i\subseteq\zeta^{a} \cdot T_{r}\\
        T_i\subseteq\zeta^{a+b}\cdot T_{r}\
     \end{cases}
\Longrightarrow 
     \begin{cases}
       \pi_1(T_i)\subseteq \pi_{(a+1 \mod r)}(T_{r})\\
       \pi_1(T_i)\subseteq \pi_{(a+b+1 \mod r)}(T_{r})\
     \end{cases}
\]
Therefore, $\pi_{(a+1\mod r}(T_{r})\cap \pi_{(a+b+1\mod r)}T_{r}\neq\emptyset$. This contradicts the fact that $\pi_{l}(T_r)\cap \pi_{s}(T_r)=\emptyset$ for any two distinct $l, s$. Thus, all $u_i$'s are pairwise distinct, i.e., $|\psi(T)|= r$. Moreover, they forms an edge in $\mathcal{H}$, as no pair of distinct $u_i$'s belongs to a same $\pi_l(T_r)$ for some $1\leq l\leq r$. This finishes the proof. 
\end{proof}
Now, for a prime number $p$ and $p$-hypergraph $\mathcal{H}$, by Lemma~\ref{lem: 9} and Theorem~\ref{Thm:2}, we have
\begin{align*}
    \chi(\mathcal{H})\geq \chi(\mathcal{C}_{P(B_{\text{edge}}(\mathcal{H}))}^{(p)}) & \geq\\
    & \left\lceil\frac{\ind\left(||\Delta (P(B_{\text{edge}}(\mathcal{H})))||\right)+1}{p-1}\right\rceil+ \left\lceil\frac{p}{p-1}\right\rceil-1=\\
    & \left\lceil\frac{\ind||B_{\text{edge}}(\mathcal{H})||+1}{p-1}\right\rceil+1,
\end{align*}
where the last equality comes from that fact that any $G$-simplicial 
complex and its barycentric subdivision are $G$-homeomorphic. 
\subsection{A topological lower bound for the chromatic number of the categorical product of hypergraphs}
In this final subsection, we present our aforementioned topological lower bound for the chromatic number of the categorical product of hypergraphs.
\begin{corollary}
\label{Thm:3}
Let $p$ be a prime number, $\mathbb Z/p$ the cyclic group of order $p$, and $P, Q$ free $\mathbb Z/p$-posets. Then, for every positive integer $r$ with $2\leq r\leq p$ we have:
    $$\left\lceil\frac{\min\{\coind ||\Delta (P)||, \coind ||\Delta (Q)||\}+1}{r-1}\right\rceil+ \left\lceil\frac{p}{r-1}\right\rceil-1\leq\chi\left({\mathcal{C}}_P^{(r)}\times\mathcal{C}_Q^{(r)}\right).$$
\end{corollary}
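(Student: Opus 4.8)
The plan is to reduce the statement to Theorem~\ref{Thm:2} applied to the single free $\mathbb Z/p$-poset $R:=P\times Q$ equipped with the diagonal action $g\cdot(a,b)=(g\cdot a,g\cdot b)$. This action is free: if $g\cdot(a,b)=(a,b)$ then $g\cdot a=a$, forcing $g=e$ since $P$ is free. Theorem~\ref{Thm:2} then gives
\[
\chi\left(\mathcal{C}_{P\times Q}^{(r)}\right)\ge\left\lceil\frac{\ind\left(||\Delta(P\times Q)||\right)+1}{r-1}\right\rceil+\left\lceil\frac{p}{r-1}\right\rceil-1,
\]
so it suffices to (i) bound $\chi(\mathcal{C}_{P\times Q}^{(r)})$ from above by $\chi(\mathcal{C}_P^{(r)}\times\mathcal{C}_Q^{(r)})$, and (ii) bound $\ind||\Delta(P\times Q)||$ from below by $\min\{\coind||\Delta(P)||,\coind||\Delta(Q)||\}$.

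For (ii) I would invoke the classical fact that the order complex of a product of posets triangulates the product of the order complexes, giving a natural, hence $G$-equivariant, homeomorphism $||\Delta(P\times Q)||\cong||\Delta(P)||\times||\Delta(Q)||$ (diagonal action on the right). For the co-index of a product one has, exactly as for the homological and mapping indices, $\coind(X\times Y)=\min\{\coind X,\coind Y\}$: the two projections give $\le$, while restricting $G$-maps $E_kG\to X$ and $E_kG\to Y$ (for $k=\min\{\coind X,\coind Y\}$) and taking their diagonal gives $\ge$. Together with the elementary inequality $\coind Z\le\ind Z$ this yields
\[
\ind||\Delta(P\times Q)||\ge\coind||\Delta(P\times Q)||=\min\{\coind||\Delta(P)||,\coind||\Delta(Q)||\}.
\]

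The heart of the argument is (i), which I would establish by showing that the identity map on the common vertex set $P\times Q$ is a hypergraph homomorphism $\mathcal{C}_{P\times Q}^{(r)}\to\mathcal{C}_P^{(r)}\times\mathcal{C}_Q^{(r)}$; a homomorphism can only decrease the chromatic number. The key lemma is that freeness forbids an element from being comparable to a nontrivial translate of itself: if $x\preceq g\cdot x$ with $g\ne e$, then applying $g$ repeatedly gives $x\preceq g\cdot x\preceq\cdots\preceq g^{p}\cdot x=x$ (as $g^{p}=e$), so $x=g\cdot x$, contradicting freeness. Now take an edge $\{(p_1,q_1),\dots,(p_r,q_r)\}$ of $\mathcal{C}_{P\times Q}^{(r)}$. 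For distinct $(p_i,q_i),(p_j,q_j)$ there is $g\ne e$ with $(p_i,q_i)$ and $(g\cdot p_j,g\cdot q_j)$ comparable in the product order, whence $p_i,g\cdot p_j$ are comparable in $P$ and $q_i,g\cdot q_j$ are comparable in $Q$. By the key lemma $p_i\ne p_j$ and $q_i\ne q_j$ (otherwise some element would be comparable to a nontrivial translate of itself), so the projections $\{p_1,\dots,p_r\}$ and $\{q_1,\dots,q_r\}$ each consist of $r$ distinct elements; the displayed comparabilities then exhibit them as edges of $\mathcal{C}_P^{(r)}$ and $\mathcal{C}_Q^{(r)}$ respectively. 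Hence $\{(p_1,q_1),\dots,(p_r,q_r)\}$ is an edge of $\mathcal{C}_P^{(r)}\times\mathcal{C}_Q^{(r)}$, as required.

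Combining (i) and (ii) with the monotonicity of $\lceil\cdot\rceil$ gives the claimed inequality. The main obstacle is precisely the homomorphism step (i): a priori the coordinate projections of an edge of $\mathcal{C}_{P\times Q}^{(r)}$ need not have $r$ distinct entries, and the whole reduction would collapse were it not for the freeness lemma, which is where the hypothesis that $P$ and $Q$ are \emph{free} $\mathbb Z/p$-posets is essential (the primality of $p$ enters separately, through Theorem~\ref{Thm:2}).
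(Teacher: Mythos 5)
Your proof is correct and takes essentially the same route as the paper: Theorem~\ref{Thm:2} applied to the free diagonal $\mathbb Z/p$-poset $P\times Q$, the sub-hypergraph/homomorphism step (the paper's Lemma~\ref{lem:7}), Walker's equivariant homeomorphism $||\Delta(P\times Q)||\cong_G ||\Delta(P)||\times||\Delta(Q)||$ (Lemma~\ref{lem:8}), and the facts $\coind (X\times Y)=\min\{\coind X,\coind Y\}$ and $\coind Z\le\ind Z$. Your chain argument $x\preceq g\cdot x\preceq\cdots\preceq g^{p}\cdot x=x$ even makes explicit the freeness detail (distinctness of the coordinate projections) that the paper's Lemma~\ref{lem:7} only asserts.
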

The proof is based on two lemmas. First, let us recall the definition of a product of two posets. Let $(P_1, \preceq_1)$, $(P_2, \preceq_2)$ be two posets. The product $P_1\times P_2$ becomes a poset with the following relation; $(p_1, p_2)\preceq (q_1, q_2)$ if $p_1\preceq_1 q_1$ and $p_2\preceq_2 q_2$. Furthermore, if they are (free) $G$-posets, then $P_1\times P_2$ is (free) $G$-poset with the natural action, i.e, $g\cdot (p_1, p_2)= (g\cdot p_1, g\cdot p_2)$.  
\begin{lemma}
\label{lem:7}
If $P$ and $Q$ are free $G$-posets, then $\mathcal{C}_{P\times Q}^{(r)}$ is a sub-hypergraph of $\mathcal{C}_P^{(r)}\times \mathcal{C}_Q^{(r)}$. In particular,
$\chi (\mathcal{C}_{P\times Q}^{(r)})\leq\chi (\mathcal{C}_P^{(r)}\times \mathcal{C}_Q^{(r)}).$
\end{lemma}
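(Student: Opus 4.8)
The plan is to prove the stronger, purely combinatorial statement that every edge of $\mathcal{C}_{P\times Q}^{(r)}$ is also an edge of $\mathcal{C}_P^{(r)}\times\mathcal{C}_Q^{(r)}$; the chromatic inequality is then immediate, since a proper coloring of a hypergraph is also a proper coloring of any sub-hypergraph on the same vertex set. First I would observe that both hypergraphs have the very same vertex set, namely $P\times Q$ (recall that $V(\mathcal{C}_P^{(r)}\times\mathcal{C}_Q^{(r)})=V(\mathcal{C}_P^{(r)})\times V(\mathcal{C}_Q^{(r)})=P\times Q$), so that only edge containment needs to be checked.

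Next I would take an arbitrary edge $\{(p_1,q_1),\dots,(p_r,q_r)\}$ of $\mathcal{C}_{P\times Q}^{(r)}$, a set of $r$ distinct pairs, and unpack the definition of the compatibility hypergraph. For each pair of indices $i\neq j$ there is some $g\in G\setminus\{e\}$ for which $(p_i,q_i)$ and $g\cdot(p_j,q_j)=(g\cdot p_j,\,g\cdot q_j)$ are comparable in the product poset $P\times Q$. Since comparability in a product poset means comparability in each coordinate, this one element $g$ simultaneously makes $p_i,\,g\cdot p_j$ comparable in $P$ and $q_i,\,g\cdot q_j$ comparable in $Q$.

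The step I expect to be the crux --- and the only place where the freeness hypothesis enters --- is verifying that the two projections $\{p_1,\dots,p_r\}$ and $\{q_1,\dots,q_r\}$ really consist of $r$ distinct elements, as is required for them to be edges of the $r$-uniform hypergraphs $\mathcal{C}_P^{(r)}$ and $\mathcal{C}_Q^{(r)}$. Suppose, for contradiction, that $p_i=p_j=:p$ for some $i\neq j$; then the comparability extracted above forces $p$ and $g\cdot p$ to be comparable in $P$ for some $g\neq e$. I would rule this out by a standard chain-and-antisymmetry argument: if $p\preceq g\cdot p$, then applying the order-preserving action repeatedly gives $p\preceq g\cdot p\preceq\cdots\preceq g^{\,\mathrm{ord}(g)}\cdot p=p$, whence by antisymmetry $p=g\cdot p$, contradicting freeness; the case $g\cdot p\preceq p$ is symmetric. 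Hence the $p_i$ are pairwise distinct, and likewise the $q_i$.

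Finally, with distinctness in hand, the compatibility conditions collected above say precisely that $\{p_1,\dots,p_r\}\in E(\mathcal{C}_P^{(r)})$ and $\{q_1,\dots,q_r\}\in E(\mathcal{C}_Q^{(r)})$; taking the listing $(u_i,v_i)=(p_i,q_i)$ then exhibits $\{(p_1,q_1),\dots,(p_r,q_r)\}$ as an edge of the categorical product $\mathcal{C}_P^{(r)}\times\mathcal{C}_Q^{(r)}$. This establishes the sub-hypergraph containment, and the inequality $\chi(\mathcal{C}_{P\times Q}^{(r)})\le\chi(\mathcal{C}_P^{(r)}\times\mathcal{C}_Q^{(r)})$ follows at once.
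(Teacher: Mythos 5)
Your proposal is correct and takes essentially the same route as the paper's proof: both note the common vertex set $P\times Q$, unpack comparability in the product poset coordinate-wise to get the compatibility conditions for the two projections, and invoke freeness to ensure the projected sets $\{p_1,\dots,p_r\}$ and $\{q_1,\dots,q_r\}$ have $r$ distinct elements. The only difference is that you spell out the chain-and-antisymmetry argument showing $p$ and $g\cdot p$ cannot be comparable for $g\neq e$ in a free $G$-poset, a step the paper asserts without detail.
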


\begin{proof}
First note that both of these hypergraphs have the same vertex set, $P\times Q$. Now, let $A=\{(p_1,q_1),\ldots, (p_r,q_r)\}$ be an edge of $\mathcal{C}_{P\times Q}^{(r)}$. We need to verify that $A$ is also an edge of $\mathcal{C}_P^{(r)}\times \mathcal{C}_Q^{(r)}$. This is equivalent to showing that $A_1=\{p_1, \ldots, p_r\}\in E(\mathcal{C}_P^{(r)})$ and $A_2=\{q_1, \ldots, q_r\}\in E(\mathcal{C}_Q^{(r)})$. Take two arbitrary elements $p_i, p_j\in\ A_1$. There is a $g\in G\setminus\{e\}$ such that $(p_i, q_i)$, and $(g\cdot p_j, g\cdot q_j)$ are comparable in $P\times Q$. Thus, $p_i$ and $g\cdot p_j$ are comparable in $P$. Also $p_i\neq p_j$, as $P$ is a free $G$-poset which implies $|A_1|=r$. Therefore, $A_1\in E(\mathcal{C}_P^{(r)})$. Similarly, $A_2\in E(\mathcal{C}_Q^{(r)})$, and the proof is completed. 
\end{proof}
\begin{lemma}[\cite{walker1988canonical}]
\label{lem:8}
If $P$ and $Q$ are free $G$-posets, then 
$$||\Delta(P\times Q)||\cong_{G}||\Delta(P)||\times ||\Delta(Q)||.$$
In other words, there is a homeomorphism between $||\Delta(P\times Q)||$ and $||\Delta(P)||\times ||\Delta(Q)||$ which preserves the $G$-action. 
\end{lemma}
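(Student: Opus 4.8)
The plan is to pass from the combinatorial order complex to the language of simplicial sets, where the product decomposition is essentially formal, and then to obtain $G$-equivariance by functoriality. First I would regard each poset as a category, with a single morphism $p\to q$ whenever $p\preceq q$, and identify the geometric realization $||\Delta(P)||$ with the realization $|N(P)|$ of the nerve $N(P)$. The non-degenerate $n$-simplices of $N(P)$ are exactly the strictly increasing chains $p_0\prec\cdots\prec p_n$, so collapsing the degenerate simplices recovers the abstract simplicial complex $\Delta(P)$, and the two realizations are canonically homeomorphic.

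Next I would use that the nerve functor preserves products. A weakly increasing chain of length $n$ in $P\times Q$ is, by the very definition of the product order, a chain $(p_0,q_0)\preceq\cdots\preceq(p_n,q_n)$ with $p_0\preceq\cdots\preceq p_n$ and $q_0\preceq\cdots\preceq q_n$, i.e.\ precisely a pair of weakly increasing chains of the same length in $P$ and in $Q$. Hence $N(P\times Q)_n = N(P)_n\times N(Q)_n$ compatibly with all face and degeneracy maps, giving a natural isomorphism of simplicial sets $N(P\times Q)\cong N(P)\times N(Q)$. Since $P$ and $Q$ are finite, $N(P)$ and $N(Q)$ are finite simplicial sets, so geometric realization commutes with their product. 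Composing the three identifications yields
$$||\Delta(P\times Q)||=|N(P\times Q)|\cong |N(P)\times N(Q)|\cong |N(P)|\times|N(Q)|=||\Delta(P)||\times||\Delta(Q)||.$$

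For $G$-equivariance I would observe that the action on $P\times Q$ is diagonal, $g\cdot(p,q)=(gp,gq)$, so the order-preserving bijection induced by each $g$ corresponds, under $N(P\times Q)\cong N(P)\times N(Q)$, to the diagonal action of $g$ on the product of nerves. All three isomorphisms above are natural in $P$ and $Q$, hence commute with the maps induced by every $g\in G$; therefore the composite homeomorphism intertwines the two $G$-actions and is a $G$-homeomorphism, which is exactly the assertion $\cong_G$.

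The one genuine subtlety, and the step I expect to require care, is the compatibility of geometric realization with products: in the naive category of topological spaces realization need not preserve products, and one must invoke either the compactly generated topology or the finiteness of the complexes. Here finiteness of $P$ and $Q$ resolves this, since for finite simplicial sets the categorical product and the ordinary product topology coincide, so $|X\times Y|\cong|X|\times|Y|$ holds on the nose. A purely combinatorial alternative would triangulate the prism $||\Delta(P)||\times||\Delta(Q)||$ by the standard shuffles of the two chains and match these shuffles with chains of $P\times Q$, but the simplicial-set argument is cleaner and makes the equivariance transparent (cf.\ \cite{walker1988canonical}).
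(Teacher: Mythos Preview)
The paper does not actually supply a proof of this lemma; it is quoted from \cite{walker1988canonical} and used as a black box. Your argument is correct and is one of the two standard routes to this fact. Note incidentally that the freeness hypothesis plays no role in the homeomorphism itself --- naturality of the nerve and realization functors already forces the composite to be $G$-equivariant for any order-preserving $G$-action --- so the lemma holds more generally than stated.

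For comparison, Walker's original argument is essentially the ``purely combinatorial alternative'' you sketch at the end: one writes down an explicit simplicial map $\Delta(P\times Q)\to\Delta(P)\times\Delta(Q)$ sending a chain of pairs to its pair of projected chains, and checks by hand, via the staircase/shuffle decomposition of a product of simplices, that this induces a homeomorphism on realizations. Your nerve-theoretic packaging hides the shuffle combinatorics inside the theorem that $|{-}|$ preserves finite products, which makes equivariance transparent by functoriality; Walker's version is more elementary in that it avoids simplicial sets entirely, at the cost of a direct cell-by-cell verification. Both approaches need the same finiteness (or compactly generated) caveat you flagged, and both yield the same natural homeomorphism.
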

Now, the proof of Corollary~\ref{Thm:3} is easily deduced from Theorem~\ref{Thm:2}, Lemmas \ref{lem:7}, \ref{lem:8}, and the following simple fact:
\begin{itemize}
\item  If $X, Y$ are $G$-spaces, then $\coind X\times Y =\min\{\coind X, \coind Y\}$.
\end{itemize}
Now, we present a topological lower bound for the chromatic number of the Categorical product of arbitrary $r$-uniform hypergraphs, provided $r$ is a prime number.
\begin{corollary}
\label{cor: 10}
Let $p$ be a prime number. For $p$-uniform hypergraphs $\mathcal{H}_1, \mathcal{H}_2$ we have
\begin{align*}
\chi\left(\mathcal{H}_1\times \mathcal{H}_2\right) \geq
 1+ \left\lceil\frac{\min\{\coind ||B_{\text{edge}}(\mathcal{H}_1)||, \coind ||B_{\text{edge}}(\mathcal{H}_2)||\}+1}{p-1}\right\rceil.
\end{align*}
\end{corollary}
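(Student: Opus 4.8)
The plan is to imitate the recovery of the Alon--Frankl--Lov\'asz bound carried out above, but now for the categorical product, reducing the estimate to the product bound for compatibility hypergraphs from Corollary~\ref{Thm:3} by exhibiting a suitable hypergraph homomorphism.

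First I would set $P=P(B_{\text{edge}}(\mathcal H_1))$ and $Q=P(B_{\text{edge}}(\mathcal H_2))$, both free $\mathbb Z/p$-posets under the cyclic-shift action. Applying Lemma~\ref{lem: 9} with $r=p$ to each factor gives hypergraph homomorphisms $\psi_1\colon\mathcal C_P^{(p)}\to\mathcal H_1$ and $\psi_2\colon\mathcal C_Q^{(p)}\to\mathcal H_2$.

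The key step is to verify that the coordinatewise map $\psi_1\times\psi_2$ is a hypergraph homomorphism from $\mathcal C_P^{(p)}\times\mathcal C_Q^{(p)}$ to $\mathcal H_1\times\mathcal H_2$. An edge of $\mathcal C_P^{(p)}\times\mathcal C_Q^{(p)}$ is a set of pairs whose first-coordinate projection is an edge of $\mathcal C_P^{(p)}$ and whose second-coordinate projection is an edge of $\mathcal C_Q^{(p)}$; I would apply the homomorphism property of $\psi_1$ to the first projection and of $\psi_2$ to the second projection, which shows that the two image projections are edges of $\mathcal H_1$ and $\mathcal H_2$ respectively, so the image is an edge of the categorical product. (Since both targets are $p$-uniform, each $\psi_i$ is automatically injective on edges, so no collapsing occurs.) Pulling a proper colouring back along this homomorphism yields
$$\chi\left(\mathcal C_P^{(p)}\times\mathcal C_Q^{(p)}\right)\le\chi\left(\mathcal H_1\times\mathcal H_2\right).$$

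Finally I would feed $P$ and $Q$ into Corollary~\ref{Thm:3} with $r=p$. Here $\left\lceil p/(p-1)\right\rceil=2$, and since the order complex of the face poset of a simplicial complex is its barycentric subdivision, hence $\mathbb Z/p$-homeomorphic to it, we have $\coind||\Delta(P)||=\coind||B_{\text{edge}}(\mathcal H_1)||$ and likewise for $Q$. Thus Corollary~\ref{Thm:3} reads
$$1+\left\lceil\frac{\min\{\coind||B_{\text{edge}}(\mathcal H_1)||,\coind||B_{\text{edge}}(\mathcal H_2)||\}+1}{p-1}\right\rceil\le\chi\left(\mathcal C_P^{(p)}\times\mathcal C_Q^{(p)}\right),$$
and chaining this with the previous inequality gives the stated bound. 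I expect the only genuine point to check is the product-of-homomorphisms step; once the two edge conditions of the categorical product are decoupled into their coordinate projections it is routine, so I anticipate no serious obstacle.
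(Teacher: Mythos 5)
Your proposal is correct and follows essentially the same route as the paper's proof: it chains the homomorphism $\mathcal C_{P(B_{\text{edge}}(\mathcal H_1))}^{(p)}\times\mathcal C_{P(B_{\text{edge}}(\mathcal H_2))}^{(p)}\to\mathcal H_1\times\mathcal H_2$ coming from Lemma~\ref{lem: 9} with the lower bound of Corollary~\ref{Thm:3}, using $\left\lceil p/(p-1)\right\rceil=2$ and the barycentric-subdivision identification of $||\Delta(P(B_{\text{edge}}(\mathcal H_i)))||$ with $||B_{\text{edge}}(\mathcal H_i)||$. The only difference is presentational: you spell out the product-of-homomorphisms verification that the paper leaves implicit in its citation of Lemma~\ref{lem: 9}, and your verification is correct.
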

\begin{proof}
\begin{multline*}
    \chi(\mathcal{H}_1\times \mathcal{H}_2)\geq 
     \chi(\mathcal{C}_{P(B_{\text{edge}}(\mathcal{H}_1))}^{(p)}\times \mathcal{C}_{P(B_{\text{edge}}(\mathcal{H}_2))}^{(p)})\geq\\
     1+ \left\lceil\frac{\min\{\coind ||B_{\text{edge}}(\mathcal{H}_1)||), \coind ||B_{\text{edge}}(\mathcal{H}_2)||\}+1}{p-1}\right\rceil,
\end{multline*}
where the first inequality comes from Lemma~\ref{lem: 9} and the second one is deduced from Corollary~\ref{Thm:3}.
\end{proof}
As immediate consequences of Corollary~\ref{cor: 10} the following family are a new type of hypergraphs satisfying Zhu's conjecture.  
\begin{itemize}
    \item For a prime number $p$, any $p$-uniform hypergraphs $\mathcal{H}_1, \mathcal{H}_2$ with
    $$\chi\left(\mathcal{H}_i\right)= 1+ \left\lceil\frac{\coind ||B_{\text{edge}}(\mathcal{H}_i)||+1}{p-1}\right\rceil\quad\text{for $i=1, 2$}.$$
\end{itemize}
In particular, any pair of the usual Kneser $r$-uniform hypergraphs satisfy the previous condition satisfy 
Zhu's conjecture when $r$ is a prime number, see~\cite{alon1986chromatic}. Note that, one can reduce a non-prime case to a prime case with a combinatorial argument, see~\cite{Meunier, hajiabolhassan2016hedetniemi}. Thus, one may reprove the H. Hajiabolhassan and F. Meunier~\cite{hajiabolhassan2016hedetniemi} result (the usual Kneser $r$-uniform hypergraphs satisfying Zhu's conjecture) from this viewpoint.

Unfortunately, we could not replace $\coind$ with $\hind$ in Corollary~\ref{cor: 10} in general as we were not able to present the topological analogues of Hedetniemi's conjecture in full generality for homological-index of $\mathbb Z/p$-spaces when $p\geq 3$ is a prime number. However, we can have that by assuming some extra conditions.
\begin{corollary}
\label{cor: 11}
Let $p$ be a prime number. If for $p$-uniform hypergraphs $\mathcal{H}_1, \mathcal{H}_2$ we have
\begin{enumerate}
    \item $\min\{\hind ||B_{\text{edge}}(\mathcal{H}_1)||, \hind ||B_{\text{edge}}(\mathcal{H}_2)||\}$ is odd or 
    \item $\min\{\hind ||B_{\text{edge}}(\mathcal{H}_1)||, \hind ||B_{\text{edge}}(\mathcal{H}_2)||\}$ is not divisible by $p-1$,
    then
\end{enumerate}
\begin{align*}
\chi\left(\mathcal{H}_1\times \mathcal{H}_2\right) \geq
 1+ \left\lceil\frac{\min\{\hind ||B_{\text{edge}}(\mathcal{H}_1)||, \hind ||B_{\text{edge}}(\mathcal{H}_2)||\}+1}{p-1}\right\rceil.
\end{align*}
\end{corollary}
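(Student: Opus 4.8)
The plan is to run the same chain of reductions as in the proof of Corollary~\ref{cor: 10}, but to feed in the homological index in place of the co-index at the final step, exploiting the homological product formula of Corollary~\ref{corollary:product-index-p}. Write $P_i = P(B_{\text{edge}}(\mathcal{H}_i))$ for the two free $\mathbb Z/p$-posets, and set $M = \min\{\hind ||B_{\text{edge}}(\mathcal{H}_1)||,\, \hind ||B_{\text{edge}}(\mathcal{H}_2)||\}$. Exactly as in Corollaries~\ref{cor: 10} and~\ref{Thm:3}, Lemma~\ref{lem: 9} (applied to each factor, taking the product of the resulting homomorphisms) together with Lemma~\ref{lem:7} gives
\[
\chi(\mathcal{H}_1\times\mathcal{H}_2) \ge \chi(\mathcal{C}_{P_1}^{(p)}\times\mathcal{C}_{P_2}^{(p)}) \ge \chi(\mathcal{C}_{P_1\times P_2}^{(p)}),
\]
and applying Theorem~\ref{Thm:2} to the free $\mathbb Z/p$-poset $P_1\times P_2$ with $r=p$ (using $\lceil p/(p-1)\rceil = 2$) yields
\[
\chi(\mathcal{C}_{P_1\times P_2}^{(p)}) \ge \left\lceil\frac{\ind ||\Delta(P_1\times P_2)||+1}{p-1}\right\rceil + 1.
\]

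The key departure from Corollary~\ref{Thm:3} is in estimating $\ind ||\Delta(P_1\times P_2)||$. By Lemma~\ref{lem:8} and the $G$-homeomorphism of a simplicial complex with the order complex of its face poset (its barycentric subdivision), there is a $\mathbb Z/p$-homeomorphism
\[
||\Delta(P_1\times P_2)|| \cong_G ||B_{\text{edge}}(\mathcal{H}_1)||\times ||B_{\text{edge}}(\mathcal{H}_2)||.
\]
Since $\hind \le \ind$ for every free $\mathbb Z/p$-space, I would bound $\ind ||\Delta(P_1\times P_2)||$ from below by the homological index of this product and then invoke Corollary~\ref{corollary:product-index-p} with $X=||B_{\text{edge}}(\mathcal{H}_1)||$ and $Y=||B_{\text{edge}}(\mathcal{H}_2)||$, for which $\min\{\hind X,\hind Y\}=M$.

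Now I would split into the two hypotheses. If $M$ is odd, then the upper bound in Corollary~\ref{corollary:product-index-p} is attained, so $\hind(X\times Y)=M$, and substituting into the two displays above produces exactly $\chi(\mathcal{H}_1\times\mathcal{H}_2)\ge\lceil(M+1)/(p-1)\rceil+1$. If instead $M$ is not divisible by $p-1$, I would use only the general lower bound $\hind(X\times Y)\ge M-1$ from Corollary~\ref{corollary:product-index-p}, which gives $\chi(\mathcal{H}_1\times\mathcal{H}_2)\ge\lceil M/(p-1)\rceil+1$; the point is that $\lceil M/(p-1)\rceil=\lceil(M+1)/(p-1)\rceil$ precisely when $M$ is not a multiple of $p-1$, so this weaker input already yields the claimed bound.

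The only genuinely delicate point, and the one I would treat as the main obstacle, is reconciling the off-by-one loss in the general homological product inequality with the ceiling in the target estimate. The case split is engineered exactly for this: oddness of $M$ eliminates the loss through the sharp case of Corollary~\ref{corollary:product-index-p}, while non-divisibility of $M$ by $p-1$ makes the residual $-1$ invisible to the ceiling function. Everything else is a verbatim transcription of the co-index argument behind Corollaries~\ref{Thm:3} and~\ref{cor: 10}.
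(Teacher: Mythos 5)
Your proposal is correct and takes essentially the same route as the paper's own proof: the identical chain through Lemma~\ref{lem: 9} (with the product of the factorwise homomorphisms), Lemma~\ref{lem:7}, Theorem~\ref{Thm:2} with $r=p$, the inequality $\hind \le \ind$, Lemma~\ref{lem:8} together with the barycentric-subdivision identification, and finally Corollary~\ref{corollary:product-index-p}. Your two-case handling at the last step --- sharpness of the product formula when $M$ is odd, and absorbing the off-by-one loss into the ceiling via $\left\lceil\frac{M+1}{p-1}\right\rceil=\left\lceil\frac{M}{p-1}\right\rceil$ when $p-1\nmid M$ --- is exactly the paper's argument.
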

\begin{proof}
\begin{multline*}
\chi\left(\mathcal{H}_1\times \mathcal{H}_2\right)\geq \chi(\mathcal{C}_{P(B_{\text{edge}}(\mathcal{H}_1))}^{(p)}\times \mathcal{C}_{P(B_{\text{edge}}(\mathcal{H}_2))}^{(p)})\quad \text{(by Lemma~\ref{lem: 9})}\\
\geq \chi(\mathcal{C}_{{P(B_{\text{edge}}(\mathcal{H}_1))}\times P(B_{\text{edge}}(\mathcal{H}_2))}^{(p)})\quad
\text{(by Lemma~\ref{lem:7})}\\                                                 \geq 1+ \left\lceil\frac{\ind ||\Delta\left(P(B_{\text{edge}}(\mathcal{H}_1))\times P(B_{\text{edge}}(\mathcal{H}_2))\right)||+1}{p-1}\right\rceil \quad \text{(by Theorem~\ref{Thm:2})}\\                      
\geq 1+ \left\lceil\frac{\hind ||\Delta\left(P(B_{\text{edge}}(\mathcal{H}_1))\times P(B_{\text{edge}}(\mathcal{H}_2))\right)||+1}{p-1}\right\rceil \quad \text{(as $\ind X\geq \hind X$)} \\                   
= 1+ \left\lceil\frac{\hind ||\Delta\left(P(B_{\text{edge}}(\mathcal{H}_1))||\right)\times ||\Delta\left(P(B_{\text{edge}}(\mathcal{\
H}_2))\right)||+1}{p-1}\right\rceil \quad \text{(by Lemma~\ref{lem:8})}\\
= 1+ \left\lceil\frac{\hind ||B_{\text{edge}}(\mathcal{H}_1)||\times ||B_{\text{edge}}(\mathcal{H}_2)||+1}{p-1}\right\rceil\\    
= 1+ \left\lceil\frac{\min\{\hind ||B_{\text{edge}}(\mathcal{H}_1)||, \hind||B_{\text{edge}}(\mathcal{H}_2)||\}+1}{p-1}\right\rceil,
\end{multline*}
where the last equality comes directly form Corollary~\ref{corollary:product-index-p} if Condition (1) is assumed. If the second condition is assumed, then the last equality again is deduced from the lower bound presented in Corollary~\ref{corollary:product-index-p} and the fact that 
$$\left\lceil\frac{a+1}{p-1}\right\rceil= \left\lceil\frac{a}{p-1}\right\rceil,$$
where $a= \{\hind ||B_{\text{edge}}(\mathcal{H}_1)||, \hind||B_{\text{edge}}(\mathcal{H}_2)||\}$ provided that $a$ is not divisible by $p-1$.
\end{proof}

%


\bibliography{main.bib}
\bibliographystyle{abbrv}
\end{document}